\newtheorem{theorem}{Theorem}[section]
\newtheorem{lemma}[theorem]{Lemma}
\newtheorem{proposition}[theorem]{Proposition}
\newtheorem{corollary}[theorem]{Corollary}
\theoremstyle{definition}
\newtheorem{remark}[theorem]{Remark}
\newtheorem{problem}{Problem}
\newtheorem{construction}[theorem]{Construction}
\newtheorem{definition}[theorem]{Definition}
\def\D{{\mathscr{D}}}
\def\PP{{\mathscr{P}}}
\def\B{{\mathscr{B}}}
\def\C{{\mathscr{C}}}
\def\Aut{{\mathrm{Aut}}}
\def\Sym{{\mathrm{Sym}}}
\def\Ch{\bm{\mathscr{C}}}
\def\Dft{\mathscr{D}^{\rm ft}}
\title{Chain-imprimitive, flag-transitive $2$-designs}
\author{Carmen Amarra$^{\rm\lowercase{a,b\ast}}$} \author{Alice Devillers$^{\rm \lowercase{a}}$} \author{Cheryl E. Praeger$^{\rm \lowercase{a}}$}
\address{$^{\rm\lowercase{a}}$Centre for the Mathematics of Symmetry and Computation, The University of Western Australia, 35 Stirling Highway, Crawley, Western Australia 6009, Australia}
\email{alice.devillers@uwa.edu.au, cheryl.praeger@uwa.edu.au}
\address{$^{\rm\lowercase{b}}${\rm{\emph{P\lowercase{ermanent address}}}}: Institute of Mathematics, University of the Philippines Diliman, C.P. Garcia Avenue, Quezon City 1101, Philippines}
\email{mcamarra@math.upd.edu.ph}
\address{$^{\ast}$Corresponding author}
\begin{document}

\maketitle
\begin{abstract}
We consider $2$-designs which admit a group of automorphisms that is flag-transitive and leaves invariant a chain of nontrivial point-partitions. We build on our recent work on $2$-designs which are block-transitive but not necessarily flag-transitive. In particular we use the concept of the ``array'' of a point subset with respect to the chain of point-partitions; the array describes the distribution of the points in the subset among the classes of each partition. We obtain necessary and sufficient conditions on the array in order for the subset to be a block of such a design. By explicit construction we show that for any $s \geq 2$, there are infinitely many $2$-designs admitting a flag-transitive group that preserves an invariant chain of point-partitions of length $s$. Moreover an exhaustive computer search, using {\sc Magma}, seeking designs with  $e_1e_2e_3$ points (where  each $e_i\leq 50$) and a partition chain of length $s=3$,  produced $57$ such flag-transitive designs, among which only three designs arise from our construction -- so there is still much to learn. 
\end{abstract}

\section{Introduction}

We investigate $2$-designs which admit an automorphism group that preserves a chain structure on its point set. Thus we are interested in $2$-$(v,k,\lambda)$ designs $\D$, namely incidence structures that consist of a \emph{point set} $\PP$ and a \emph{block set} $\B$ of subsets of $\PP$ with the property that $|\PP| = v$,  each block has size $k$, and any two distinct points are contained in exactly $\lambda$ blocks. A subgroup $G \leq \Aut(\D)$ is said to act \emph{$s$-chain-imprimitively} on $\PP$ if $G$ is transitive on $\PP$ and leaves invariant each partition in an $s$-chain
    \begin{equation} \label{eq:chain}
    \Ch : \ {\textstyle\binom{\PP}{1}} = \C_0 \prec \C_1 \prec \ldots \prec \C_{s-1} \prec \C_s = \{\PP\}
    \end{equation}
of partitions of $\PP$. (Here $\binom{\PP}{1}$ denotes the partition of $\PP$ into singletons, and $\C_{i-1} \prec \C_i$ means that each $\C_{i-1}$-class is a proper subset of some $\C_i$-class.) Any point-imprimitive design is $2$-chain-imprimitive and recent results in \cite{chainspaper}, that both characterised parameters and gave explicit examples, showed that \emph{there exist $G$-block-transitive $2$-$(v,k,\lambda)$ designs $\D=(\PP,\B)$ with $G$ acting $s$-chain-imprimitively on $\PP$ for which the chain length $s$ is arbitrarily large}. A question arose from the work in \cite{chainspaper} as to whether the same assertion would hold true if we required also that $G$ should be flag-transitive, not only block-transitive. Confirming this assertion is the aim of this paper.

A  \emph{flag} of a $2$-$(v,k,\lambda)$ design $\D=(\PP,\B)$ is a pair $(\delta,B) \in \PP \times \B$ such that $\delta \in B$, and $\D$ is \emph{$G$-flag-transitive} (for $G\leq \Aut(\D)$) if $G$ acts transitively on the set of flags. Flag-transitivity of $G$ implies that $G$ is both block-transitive and point-transitive, but the converse does not hold in general. Moreover, Davis \cite{D87} showed that for each value of the parameter $\lambda$ there are only finitely many possible $\D$ with $G$ point-imprimitive and flag-transitive. So we were not at all sure when we began this study that we would find examples of $G$-flag-transitive designs with $G$ acting $s$-chain-imprimitively on $\PP$ and the chain length $s$ unboundedly large. None of the block-transitive designs constructed in \cite{chainspaper} were flag-transitive, and although there are many papers in the literature contributing to the classification of flag-transitive, point-imprimitive $2$-designs, for example \cite{grids21,CP16,CZ,DLPX,DP21,DP22,LPR,MS,M,M2,OR,P,PZ,ZZ}, we did not find any constructions in them which would help to answer this question. Building on the theory developed in \cite{chainspaper} we were able to prove the following result. 

\begin{theorem} \label{thm:ex-ft}
For any integer $s \geq 2$, there exist infinitely many $2$-designs $\D=(\PP,\B)$ such that some subgroup  $G\leq \Aut(\D)$ acts flag-transitively on  $\D$ and $s$-chain-imprimitively on $\PP$.
\end{theorem}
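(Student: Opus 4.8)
The plan is to build these designs by an explicit product (iterated wreath) construction, and to use the array criterion to reduce flag-transitivity to a system of numerical identities. First I would fix the point set to be a product $\PP = \Delta_1 \times \cdots \times \Delta_s$ with $|\Delta_i| = e_i \geq 2$, and define the chain $\Ch$ of \eqref{eq:chain} by declaring two points to lie in a common $\C_i$-class exactly when they agree in coordinates $i+1, \ldots, s$; one checks at once that $\C_{i-1} \prec \C_i$ for each $i$, so this is a genuine $s$-chain. For the group I would take an iterated wreath product $G = G_1 \wr \cdots \wr G_s$ acting on $\PP$ in the natural imprimitive way, where each factor $G_i \leq \Sym(\Delta_i)$ is transitive (and chosen below with extra homogeneity, e.g.\ a one-dimensional affine or sharply transitive group). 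By construction $G$ is transitive on $\PP$ and fixes every $\C_i$, hence acts $s$-chain-imprimitively.

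Next I would translate the goal into conditions on a single base block. Since $G$ is point- and block-transitive, the incidence structure $\D = (\PP, B^G)$ arising from one subset $B \subseteq \PP$ is a flag-transitive $2$-design precisely when: (a) $B$ satisfies the array conditions developed in this paper's framework, guaranteeing that every unordered pair of points lies in a constant number $\la$ of blocks; and (b) the setwise stabiliser $\Stab_G(B) = G_B$ acts transitively on the points of $B$. Condition (b) is equivalent to flag-transitivity given point- and block-transitivity, since for any block $C = B^g$ one has $G_C = g^{-1}G_B g$. The subtlety in (a) is that pairs $\{\alpha,\beta\}$ split into \emph{types} according to the least index $i$ with $\alpha,\beta$ in a common $\C_i$-class, and a priori the coverage number $\la_i$ depends on $i$; the array criterion is exactly the demand that all the $\la_i$ coincide.

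The heart of the construction is then to choose the array of $B$ to be \emph{balanced}, meaning that $B$ meets the relevant classes in numbers depending only on the level $i$, not on the particular class. This homogeneity does two things simultaneously. On the one hand it forces (b): an element of $G_B$ can be assembled, level by level, from the wreath factors $G_i$ so as to carry any point of $B$ to any other, whence $G_B$ is transitive on $B$. On the other hand it collapses the array conditions of (a) to a small system of equations in the $e_i$ and the level-wise intersection numbers. To obtain \emph{infinitely many} designs for each fixed $s$, I would exhibit an infinite family of solutions — for instance by fixing all intersection numbers and all but one of the $e_i$, and letting the remaining parameter range over an arithmetic progression (or an infinite set of prime powers) satisfying the relevant congruence conditions. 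Distinct parameter vectors give designs on different numbers $v = \prod_i e_i$ of points, hence infinitely many pairwise non-isomorphic examples.

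The main obstacle is reconciling (a) with (b). Block-transitivity together with the array criterion (inherited from \cite{chainspaper} and refined here) already secures the $2$-design property, just as in the purely block-transitive constructions; the genuinely new difficulty is flag-transitivity, which imposes transitivity of $G_B$ on $B$ and hence balance of the array. Balanced arrays are far more constrained than those needed for mere block-transitivity — presumably why none of the designs of \cite{chainspaper} were flag-transitive, and why the construction recovers only a handful of the computer-found examples. I therefore expect the bulk of the work to lie in engineering an array that is at once design-balanced (so that all $\la_i$ agree) and stabiliser-homogeneous (so that $G_B$ is transitive), most likely by taking a highly structured base block within each factor $\Delta_i$ — such as one arising from a difference set or a subspace in an affine or linear model of $\Delta_i$ — and verifying that the iterated wreath action preserves the required balance at every level.
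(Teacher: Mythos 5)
Your outline matches the paper's strategy in broad strokes: a product point set, an iterated wreath product preserving the coordinate chain, a base block whose array is constant on the classes it meets (the paper calls such blocks \emph{uniform}), and a reduction of both the $2$-design property and the transitivity of $G_B$ on $B$ to numerical identities in the $e_i$ and the level-wise intersection numbers. But the proposal stops exactly where the real work begins: you never produce the parameters. The necessary and sufficient conditions (Theorem \ref{thm:ft}) form a nested system of divisibility constraints --- $v-1$ must divide $(k-1)d$ with $d=\gcd(e_1-1,\dots,e_s-1)$, and each $y_i$ must divide $(e_{i+1}-1)\bigl(\prod_{j\le i}e_j\bigr)/d$ --- and for arbitrary $s$ it is not at all obvious that solutions exist, let alone infinitely many. ``Fix all but one $e_i$ and let the remaining one range over an arithmetic progression'' presupposes a seed solution for the first $s-1$ levels, which is precisely what has to be constructed. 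The paper's key arithmetic move (Construction \ref{constr:family}) is the recursive choice $e_1=d+1$, $e_i=d+\prod_{j\le i-1}e_j$, $k=1+(v-1)/d$, which forces $y_i=(e_{i+1}-1)/d$ for $i<s$ and makes every divisibility condition hold identically; letting $d$ range over all integers at least $2$ then gives infinitely many designs for each $s$. Without some such explicit choice your argument does not establish existence.

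Two further points. First, your suggestion to take the wreath factors $G_i$ to be proper transitive subgroups (affine, sharply transitive) undermines the reduction you rely on: the fact that a uniform array collapses the $2$-design condition to $s$ scalar identities (Theorem \ref{mainthm:2des} together with Lemma \ref{lem:unif}) is proved for the \emph{full} chain stabiliser $S_{e_1}\wr\cdots\wr S_{e_s}$, for which the orbit of $B$ consists of \emph{all} subsets with an equivalent array; for a smaller $G$ the orbit $B^G$ is smaller and pair coverage is no longer determined by the array alone. Second, difference sets and subspaces are an unnecessary complication: with the full symmetric groups the base block can simply be the box $E_1\times\cdots\times E_s$ with $|E_i|=y_i/y_{i-1}$ as in \eqref{B-unif}, its uniformity is immediate, and transitivity of $G_B$ on $B$ follows from transitivity of $\Sym(E_1)\wr\cdots\wr\Sym(E_s)$ on the product (Lemma \ref{lem:flagtr}).
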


In order to prove this theorem, which ultimately relies on an explicit construction (see Constructions \ref{constr:gen} and \ref{constr:family}), we needed to develop further the theoretical framework introduced in \cite{chainspaper}. Suppose that $\D=(\PP,\B)$ is a $2$-$(v,k,\lambda)$ design and $G\leq \Aut(\D)$ is flag-transitive on  $\D$ and $s$-chain-imprimitive on $\PP$, preserving the partitions in an $s$-chain $\Ch$ as in \eqref{eq:chain}. Parameters of significance are the positive integers $e_1,\dots,e_s$ such that, for $1 \leq i \leq s$, each class of $\C_i$ contains $e_i$ classes of $\C_{i-1}$. Then $v=e_1\dots e_s$, and in the next result Theorem~\ref{thm:ft} we identify necessary and sufficient conditions on the parameters $v,k, s, e_1,\dots,e_s$ for the existence of a flag-transitive, $s$-chain-imprimitive $2$-design. 

\begin{theorem} \label{thm:ft}
Let $s, k, e_1, \ldots, e_s$ be integers, all at least $2$, such that $k < v$, where $v := \prod_{i=1}^s e_i$. Let  $\PP$ be a set of  size $v$ and  $\Ch$ a chain of partitions as in \eqref{eq:chain}, such that for $1 \leq i \leq s$ each class of $\C_i$ contains $e_i$ classes of $\C_{i-1}$. Let $d := \gcd(e_1 - 1, \ldots, e_s - 1)$, and for each $i \in \{1, \ldots, s\}$ let       
    \begin{equation} \label{def:yi}
    y_i := 1 + \frac{k-1}{v-1} \left( \left(\prod_{j=1}^i e_j\right) - 1 \right) \quad \mbox{(so in particular $y_s = k$)}.
    \end{equation}
Then there exists a $2$-$(v,k,\lambda)$ design $\D$ with point set $\PP$, for some $\lambda$, and a group $G\leq \Aut(\D)$ such that $G$ is flag-transitive on $\D$ and preserves the partitions in the $s$-chain $\Ch$, if and only if the following conditions both hold:
    \begin{itemize}
    \item[(FT1)] \label{flagtr1} $v-1$ divides $(k-1) \cdot d$; and 
    \item[(FT2)] \label{flagtr2} for each $i \in \{1, \ldots, s-1\}$, $y_{i}$ is a positive integer dividing $(e_{i+1} - 1)\left(\prod_{j=1}^{i} e_j\right)/d$.
    \end{itemize}
Moreover if these two conditions hold and $\D = (\PP,\B)$ is such a design then, for each $i \in \{1, \ldots, s\}$, each class $C \in \C_i$ and  each block $B \in \B$, the intersection size $|B \cap C| \in \{0,y_i\}$.
\end{theorem}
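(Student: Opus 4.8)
The plan is to prove the two directions of the equivalence separately and to extract the ``moreover'' statement as a by-product of the necessity argument. Write $m_i := \prod_{j=1}^i e_j$, so that each class of $\C_i$ has size $m_i$ and $m_s = v$, and set $t := (k-1)d/(v-1)$. Everything rests on one decisive consequence of flag-transitivity: if $\D$ and $G$ exist as in the statement, then for a flag $(\delta,B)$ the size $|B\cap C|$, where $C\in\C_i$ is the class containing $\delta$, is a flag-invariant and hence takes a single value $y_i$ as $(\delta,B)$ ranges over all flags. Since every nonempty intersection $B\cap C$ with $C\in\C_i$ arises from a flag $(\delta,B)$ with $\delta\in B\cap C$, this already gives $|B\cap C|\in\{0,y_i\}$ for all $B\in\B$ and $C\in\C_i$, which is the ``moreover'' claim. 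To identify $y_i$ I would fix a class $C\in\C_i$ and double count the incidences between the $m_i(m_i-1)$ ordered point-pairs inside $C$ and the blocks meeting $C$: each such block meets $C$ in exactly $y_i$ points and each pair lies in $\lambda$ blocks, giving $(m_i-1)\lambda = r(y_i-1)$ with $r=\lambda(v-1)/(k-1)$ the replication number; solving yields \eqref{def:yi}, and in particular $y_s=k$.

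Next I would derive the two conditions. Each $y_i$ is a genuine nonzero intersection number, hence a positive integer, and $y_i\in\mathbb{Z}$ is equivalent to $(v-1)\mid (k-1)(m_i-1)$. I would first prove the elementary identity $\gcd(m_1-1,\ldots,m_s-1)=d$ by induction, using $m_{i+1}-1 = m_i(e_{i+1}-1)+(m_i-1)$ together with $\gcd(d,m_i)=1$. Then $(v-1)\mid (k-1)(m_i-1)$ holding for all $i$ is equivalent to $(v-1)\mid (k-1)\gcd(m_1-1,\ldots,m_s-1)=(k-1)d$, which is (FT1). For (FT2) the nested structure of a block is what forces it: a block $B$ meeting a class $W\in\C_{i+1}$ meets it in $y_{i+1}$ points, distributed over the $e_{i+1}$ classes of $\C_i$ inside $W$ with each receiving $0$ or $y_i$; hence $y_{i+1}/y_i$ is a positive integer, i.e. $y_i\mid y_{i+1}$. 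The decisive point is that, under (FT1), $y_i-1 = t(m_i-1)/d$ is a multiple of $t$, so $\gcd(y_i,t)=1$; since $y_{i+1}-y_i = t\,(e_{i+1}-1)m_i/d$, we conclude $y_i\mid y_{i+1}\iff y_i\mid (e_{i+1}-1)m_i/d$, which is exactly (FT2). Thus (FT1) and (FT2) are necessary, and the same chain of equivalences is reused below.

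For the converse I would realize (FT1) and (FT2) by explicit construction (Constructions~\ref{constr:gen} and~\ref{constr:family}). The two conditions guarantee that $y_1\mid y_2\mid\cdots\mid y_s=k$ is a sequence of positive integers with each ratio $y_{i+1}/y_i$ at most $e_{i+1}$ (this last inequality reduces to $k<v$), so a base block $B_0$ with the prescribed nested array — meeting exactly $y_{i+1}/y_i$ of the $\C_i$-classes inside each $\C_{i+1}$-class it meets — exists. I would then set $\B := B_0^{\,G}$ for a suitably chosen transitive subgroup $G\leq\Aut(\Ch)$ of the (iterated wreath) group of all permutations of $\PP$ preserving every partition of $\Ch$, so that $G$ is automatically block-transitive and chain-imprimitive. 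Two points then remain: that $(\PP,\B)$ is a $2$-design, for which I would invoke the array criterion of \cite{chainspaper} giving necessary and sufficient conditions on the array of $B_0$ for the $G$-orbit to have constant pair-coverage $\lambda$; and that $G$ is flag-transitive, which — since $G$ is already block-transitive — is equivalent to $G_{B_0}$ acting transitively on the $k$ points of $B_0$, a property to be built into the choice of $G$ and $B_0$.

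I expect the sufficiency direction to be the main obstacle. The necessity argument is essentially bookkeeping: flag-invariance, one double count, and two short number-theoretic steps. By contrast, producing genuinely flag-transitive examples is precisely what the block-transitive constructions of \cite{chainspaper} did not achieve, and the difficulty is to meet two competing demands at once — arranging the array of $B_0$ so that every pair-type (indexed by the smallest $\C_i$-class containing the pair) receives the same number of blocks (the $2$-design condition), while simultaneously forcing the block stabilizer $G_{B_0}$ to be transitive on $B_0$ (flag-transitivity). Reconciling these constraints, and doing so uniformly for arbitrarily long chains, is why the explicit families in Constructions~\ref{constr:gen} and~\ref{constr:family} are required rather than a soft existence argument.
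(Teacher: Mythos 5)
Your proposal is correct, and its overall architecture coincides with the paper's: necessity is obtained from the fact that flag-transitivity forces every block to be ``uniform'' with respect to the chain, followed by arithmetic on the $y_i$ (in particular the coprimality of $y_i$ with $t=(k-1)d/(v-1)$, which is exactly the paper's Lemma~\ref{lem:arith}(a) with $u=t$, and the deduction $y_i \mid y_{i+1}-y_i$), while sufficiency is delegated to the explicit base block of Construction~\ref{constr:gen} and the array criterion of \cite{chainspaper}, just as in the paper's Proposition~\ref{prop:gen-ft}. The one genuine (if local) divergence is how you identify the value of $y_i$: you fix a $\C_i$-class $C$ and double count incidences between ordered point-pairs in $C$ and blocks, getting $(m_i-1)\lambda=r(y_i-1)$ and hence formula \eqref{def:yi} in one stroke, whereas the paper first replaces $G$ by the full chain stabiliser $W$, invokes conditions \eqref{2des-1} and \eqref{2des} of Theorem~\ref{mainthm:2des}, and determines the $y_i$ by induction on $i$ (Claim~1 of Section~\ref{sec:flagtr}). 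Your count is more self-contained and, for the necessity direction, avoids the reduction to $G=W$ altogether; the paper's route buys uniformity of exposition, since the same array conditions are reused verbatim to verify the $2$-design property in the sufficiency direction. Your auxiliary identity $\gcd(m_1-1,\ldots,m_s-1)=d$ is slightly stronger than what the paper records ($d\mid c_i-1$), but both suffice for (FT1), and your derivation of the ``moreover'' clause directly from flag-invariance of $|B\cap C|$ matches the paper's.
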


In Theorem~\ref{thm:ft}, the flag-transitive group $G$ is contained in the full stabiliser  of the partition chain $\Ch$, namely the iterated wreath product $W=S_{e_1} \wr \ldots \wr S_{e_s}$. Thus (see, for example, \cite[Proposition 1.1]{CP93}), if we replace  $G$ by its over-group $W$, and replace the block set $\B$ (which is the $G$-orbit $B^G=\{B^g\mid g\in G\}$ of a subset $B \subseteq \PP$) by the possibly larger $W$-orbit $B^W$, then $\D'=(\PP, B^W)$ is also a $2$-$(v,k,\lambda')$ design with $W\leq \Aut(\D')$ flag-transitive and preserving the same $s$-chain $\Ch$ of partitions of $\PP$. So $\D'$ will have the same symmetry properties,  the same point set $\PP$ and partition chain $\Ch$,  the same parameters $v,k, s, e_1,\dots,e_s$, and possibly larger $\lambda'\geq \lambda$. Thus, in order to identify in Theorem~\ref{thm:ft} necessary and sufficient conditions on the parameters $v,k, s, e_1,\dots,e_s$ for the existence of a flag-transitive, $s$-chain-imprimitive $2$-design, we may work with the larger group $W$ and design $\D'$. Similarly, in order to prove Theorem \ref{thm:ex-ft}, we find, for each $s$, infinitely many parameter sequences $k, e_1,\dots,e_s$ satisfying the conditions (i) and (ii) of Theorem~\ref{thm:ft}. Moreover, we explicitly exhibit a $k$-subset $B$ satisfying the conditions in the final sentence of Theorem~\ref{thm:ft}, and prove that $(\PP, B^W)$ is a $2$-design with the required properties (see Constructions \ref{constr:gen} and \ref{constr:family}; and Propositions~\ref{prop:gen-ft} and \ref{prop:ft}).

We end this section with some comments in Subsection~\ref{sub:comments} about the design constructions and some open questions. The organisation of the rest of paper is as follows: Section~\ref{s:prelims} contains necessary background work about iterated wreath products $W$ and their action on $\PP$. Section~\ref{sec:arrays} contains some specific results about `uniform' point-sets, which are used in Section~\ref{sec:gencon} to justify our general construction method for flag-transitive $s$-chain-imprimitive $2$-designs given in Construction~\ref{constr:gen}. In Section~\ref{sec:flagtr} we prove Theorem~\ref{thm:ft}, and in the final Section~\ref{sec:expl} we present explicit families of examples, and in particular we prove Theorem~\ref{thm:ex-ft}.

\subsection{Comments on our constructions and results} \label{sub:comments}

We make several comments about constructions of flag-transitive $s$-chain-imprimitive $2$-designs, and in particular relate our work to earlier results.

\smallskip

(a) \emph{Comment on Theorem~\ref{thm:ft} for the case $s=2$.}\quad Theorem~\ref{thm:ft} for the case $s=2$ can be derived from several results in \cite{CP93} as follows. If $s=2$ then the number of points is $v=|\PP|=e_1e_2$.  The chain $\Ch$ in \eqref{eq:chain} has just one nontrivial partition, namely $\C_1=\{C_1,\dots,C_{e_2}\}$, with $e_2$ classes of size $e_1$. The point-block incidence structure $\D=\D(e_1,e_2; \mathbf{x}) = (\PP, \B)$ defined in \cite[Section 2]{CP93}, where $\mathbf{x}=(x_1,\dots, x_{e_2})$ with non-negative integer entries $x_1 \geq \ldots \geq x_{e_2}$, has as block set $\B$ the collection of all subsets $B\subseteq \PP$ such that the sequence $\big(\, |B\cap C_i| \ | \ 1\leq i\leq e_2 \,\big)$ is a permutation of the sequence $\mathbf{x}$. Thus $k:=|B|=\sum_i x_i$, and $\Aut(\D)$ is the wreath product $S_{e_1}\wr S_{e_2}$. As noted at the beginning of the proof of \cite[Proposition 4.1]{CP93}, a necessary and sufficient condition for $\D(e_1,e_2; \mathbf{x})$ to be flag-transitive is that the tuple $\mathbf{x}$ is some rearrangement of $(\ell^{k/\ell},0^{e_2-k/\ell})$, for a divisor $\ell$ of $k$ with $1<\ell<k$. Then the criterion given by \cite[Proposition 2.2(ii)]{CP93} for $\D$ to be a $2$-design is that $k(\ell-1) = k(k-1)(e_1-1)/(v-1)$, that is $\ell=1+(k-1)(e_1-1)/(v-1)$. Thus $\ell$ is the parameter $y_1$ in \eqref{def:yi}. Since $\ell$ is a positive integer this equation implies that $v-1$ divides $(k-1)(e_1-1)$. Note that $\gcd(v-1, e_1-1)=\gcd(e_1e_2-1, e_1-1)=\gcd(e_2-1, e_1-1)$, which is the parameter $d$ of Theorem~\ref{thm:ft}. Hence $v-1$ divides $(k-1)d$ which is condition (i) of Theorem~\ref{thm:ft}. To obtain condition (ii), we first subtract the equation $\ell=1+(k-1)(e_1-1)/(v-1)$ from the equation $k=1+(k-1)(e_1e_2-1)/(v-1)$ to obtain $k-\ell = z$, where $z=(k-1)(e_2-1)e_1/(v-1)$. Note that $z$ is a positive integer, and as $\ell$ divides $k$, $\ell$ must also divide $z$. Also $\gcd(e_2-1, v-1)=\gcd(e_2-1, e_1-1)=d$ and $v-1$ is coprime to $e_1$, so $z$ factorises as 
    \[
    z= \frac{(e_2-1)e_1}{d}\cdot \frac{k-1}{(v-1)/d}.
    \]
Then, again as $\ell$ divides $k$, $\ell$ is coprime to the second factor and hence $\ell$ divides $(e_2-1)e_1/d$, which is condition (ii). Thus existence of a flag-transitive point-imprimitive $2$-design implies conditions (i) and (ii), and reversing this argument we see that if conditions (i) and (ii) hold then the design  $\D(e_1,e_2; \mathbf{x})$, with $\mathbf{x}=(\ell^{k/\ell},0^{e_2-k/\ell})$ where $\ell=1+(k-1)(e_1-1)/(v-1)$, gives an example of a $2$-design with the required properties. 

\medskip
(b) \emph{Comment on Theorem~\ref{thm:ex-ft} for the case $s\geq3$.}\quad For each $s \geq 3$ we can obtain a flag-transitive, $(s-1)$-chain-imprimitive $2$-design from a flag-transitive, $s$-chain-imprimitive $2$-design, as follows: Given a point set $\PP$ with an $s$-chain $\Ch$ of partitions $\C_1, \ldots, \C_s$, with parameters $e_1, \ldots, e_s$ as in Theorem \ref{thm:ft}, we form a new chain $\Ch'$ of partitions $\C'_0, \ldots, \C'_{s-1}$ of $\PP$ by deleting any nontrivial partition $\C_i$ ($1 \leq i \leq s-1$) from $\Ch$, so that $\C'_j = \C_j$ for $0 \leq j \leq i-1$ and $\C'_j = \C_{j+1}$ for $i \leq j \leq s-1$. Let $e'_j := e_j$ for $0 \leq j \leq i-2$, $e'_{i-1} := e_ie_{i-1}$, and $e'_j := e_{j+1}$ for $i \leq j \leq s-1$. Then the partition $\C'_j$ contains $e'_j$ classes of $\C'_{j-1}$, for $1 \leq j \leq s-1$. Suppose that $\D = \big(\PP,B^G\big)$ is a $G$-flag-transitive $2$-design such that $G$ is $s$-chain-imprimitive on $\PP$, preserving the $s$-chain $\Ch$, where $B$ is a $k$-subset of $\PP$. Then $G \leq W:= S_{e_1} \wr \ldots \wr S_{e_s}$, the stabiliser of the $s$-chain $\Ch$. We first replace $\B=B^G$ by the possibly larger block set $B^W$ as explained in the paragraph after the statement of Theorem~\ref{thm:ft}, and note that $W$ leaves the $(s-1)$-chain $\Ch'$ invariant, so  $W$ is a subgroup of the stabiliser $H := S_{e'_1} \wr \ldots \wr S_{e'_{s-1}}$ of $\Ch'$.  Define $\D' := \big(\PP, B^H\big)$. Then $\D'$ is an $H$-flag-transitive $2$-design (see, for example, \cite[Proposition 1.1]{CP93}), and by definition,  $H$ is $(s-1)$-chain-imprimitive on $\PP$, preserving the $(s-1)$-chain $\Ch'$.

\medskip
(c) \emph{Comment on further examples.}\quad  Using Theorem \ref{thm:ft} and \textsc{Magma} \cite{magma}, we were able to find all parameter sets $(e_1, e_2, e_3; k)$ with $e_1, e_2, e_3 \leq 50$ which correspond to flag-transitive, $3$-chain-imprimitive $2$-designs with automorphism group $G = S_{e_1} \wr S_{e_2} \wr S_{e_3}$ and block size $k$. These are listed in Table \ref{tab:flag-tr}; the entries in boldface in Table \ref{tab:flag-tr} correspond to parameters of designs obtained using Construction \ref{constr:family}. We see from Table \ref{tab:flag-tr} that there are many more explicit feasible parameter sets apart from those used in Construction \ref{constr:family}, and this suggests that there may be other methods for construction of  infinite families of flag-transitive, $s$-chain-imprimitive $2$-designs.

\begin{problem}
Find more infinite families of flag-transitive, $s$-chain-imprimitive $2$-designs, with $s\geq3$. In particular, find more families with $s$ unbounded.
\end{problem}

\emph{Data availability statement.} No datasets were generated or analysed in the current study.

\emph{Conflicts of interest.} All authors have no conflicts of interest to disclose.

\section{Chain imprimitivity} \label{s:prelims}

\subsection{Chain structure on a set} \label{ss:chain}

We shall use the same notation and terminology as in \cite{chainspaper} for the labelling of the point set $\PP$ and the partition chain $\Ch$.

Given integers $s \geq 2$ and $e_1, \ldots, e_s \geq 2$. The set $\PP$ is the Cartesian product $\prod_{i=1}^s \mathbb{Z}_{e_i}$. 

For any $i \in \{0, \ldots, s\}$ and any $(\delta_j)_{j>i} \in \prod_{j>i} \mathbb{Z}_{e_j}$, we define the subset $C_{(\delta_j)_{j>i}}$ of $\PP$ as follows:
  \begin{equation} \label{part}
    C_{(\delta_j)_{j>i}} := \big\{ (\varepsilon_j)_{j=1}^s \in \PP \ \big| \ \varepsilon_j = \delta_j \ \text{for all} \ j > i \big\}. 
    \end{equation}
When $i=0$, $(\delta_j)_{j>0}=(\delta_1,\delta_2,\ldots,\delta_s)$ is a point of $\PP$ and  $C_{(\delta_j)_{j>i}}$ is the singleton set containing that point. When $i=s$,  $(\delta_j)_{j>s}=(\,)$ is the unique empty tuple, and  the membership condition \eqref{part} for $ C_{(\delta_j)_{j>s}}=C_{(\,)}$ is vacuous, and hence $C_{(\delta_j)_{j>s}}=\PP$. By convention  $\prod_{j>s} \mathbb{Z}_{e_j}$ is the singleton set $\{(\,)\}$. For any $i \in \{0, \ldots, s\}$, we then define the family $\C_i$ of subsets by
    \begin{equation} \label{partn}
    \C_i := \Big\{ C_{(\delta_j)_{j>i}} \ \Big| \ (\delta_j)_{j>i} \in \prod_{j>i} \mathbb{Z}_{e_j} \Big\}.
    \end{equation}
Then $\C_0$ and $\C_s$ are trivial partitions of $\PP$ consisting of the singleton subsets of $\PP$ and of the entire set $\PP$, respectively, while $\C_1, \ldots, \C_{s-1}$ are nontrivial partitions of $\PP$. For each $i \in \{0, \ldots, s-1\}$ the partition $\C_i$ is a proper refinement of $\C_{i+1}$, which we write as $\C_i \prec \C_{i+1}$. Thus, under the partial order $\prec$, the partitions $\C_0, \ldots, \C_s$ form a chain $\Ch$ as in \eqref{eq:chain}. Moreover each set $\PP$ and chain $\Ch$ with parameters $e_1,\dots,e_s$ can be labelled in this way. 

Each partition $\C_i$ has parameters $c_i$, $d_i$, and $e_i$, where $c_i$ is the size of each class in $\C_i$, $d_i$ is the number of classes in $\C_i$, $e_0 := 1$, and, for each $i \in \{1, \ldots, s\}$, $e_i$ is the number of $\C_{i-1}$-classes contained in each $\C_i$-class. Thus
    \begin{equation*} 
    c_i = \prod_{j=0}^i e_j \quad \text{and} \quad d_i = \prod_{j=i+1}^s e_j.
    \end{equation*}
In particular $c_0 = d_s = 1$, $c_s = d_0 = |\PP|$, $c_1 = e_1$, and $d_{s-1} = e_s$.

For $i \in \{0, \ldots, s\}$ and any nonempty subset $X \subseteq \PP$, we define
    \begin{equation} \label{C(X)}
    \C_i(X) := \{ C \in \C_i \ | \ X \cap C \neq \varnothing \}.
    \end{equation}
In particular, if $C \in \C_i$, then $\C_{i-1}(C)$ is the set of all $\C_{i-1}$-classes that are contained in $C$, and $\C_{i+1}(C) = \{C^+\}$, where 
    \begin{equation*} 
    C^+ := \text{the unique $\C_{i+1}$-class that contains $C \in \C_i$}.
    \end{equation*}
It follows that for any subset $X \subseteq \PP$ and any class $C \in \C_i$, the set of all $\C_{i-1}$-classes that are contained in $C$ and that contain points of $X$ is
    \begin{align*}
    \C_{i-1}(X) \cap \C_{i-1}(C)
    &= \{ C' \in \C_{i-1} \ | \ X \cap C' \neq \varnothing \ \text{and} \ C' \subseteq C \} \\
    &= \{ C' \in \C_{i-1} \ | \ C' \cap X \cap C \neq \varnothing \} \\
    &= \C_{i-1}(X \cap C).
    \end{align*}

\subsection{Iterated wreath product of groups} \label{subsec:wreath}

Using the notation of Subsection \ref{ss:chain}, for each $i \in \{1, \ldots, s\}$ let $G_i \leq \Sym(\mathbb{Z}_{e_i}) = S_{e_i}$. We denote by $G := G_1 \wr \ldots \wr G_s$ the \emph{iterated wreath product} of the groups $G_1, \ldots, G_s$. For details of the definition of the group $G$, its binary operation, and its actions on $\PP$ and on each of the partitions $\C_1, \ldots, \C_s$ we refer the reader to the treatment in \cite[Subsection 2.3]{chainspaper}, based on \cite{BPRS}.

The following fact will be useful in our proofs. Observe that for $s\geq2$, $1 \leq j \leq s-1$, and for each $C \in \C_j$,
    \begin{equation} \label{eq:chC}
    \Ch(C) : \ {\textstyle\binom{C}{1}} = \C_0(C) \prec \C_1(C) \prec \ldots \prec \C_{j-1}(C) \prec \C_j(C) = \{C\}
    \end{equation}
is a $j$-chain on $C$. For a group $H$ acting on a set $X$ we denote by $H^X$ the subgroup of $\Sym(X)$ induced by $H$.

\begin{proposition} \label{p:iterwr}
Let $s \geq 2$, $1 \leq i \leq s$, and $C \in \C_i$, and let $G = S_{e_1} \wr \ldots \wr S_{e_s}$.
    \begin{enumerate}[(a)]
    \item The stabiliser in $\Sym(C)$ of the chain $\Ch(C)$ in \eqref{eq:chC} is $H(C) := G_C^C \cong S_{e_1} \wr \ldots \wr S_{e_i}$, and $H(C)^{\C_{i-1}(C)} \cong S_{e_i}$.
    \item Further, for $1 \leq j \leq i-1$, $H(C) \cong \left( \prod_{C' \in \C_j(C)} H(C') \right) \rtimes H(C)^{\C_j(C)}$, where $H(C)^{\C_j(C)} \cong S_{e_{j+1}} \wr \ldots \wr S_{e_i}$ and $|\C_{j-1}(C)| = \prod_{\ell=j}^i e_\ell$.
    \item In particular, for $1 \leq j \leq s-1$, $G = \left( \prod_{C' \in \C_j} H(C') \right) \rtimes G^{\C_j}$, where $G^{\C_j} \cong S_{e_{j+1}} \wr \ldots \wr S_{e_s}$ and $|\C_j| = d_j = \prod_{i=j+1}^s e_i$.
    \end{enumerate}
\end{proposition}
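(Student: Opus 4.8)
The plan is to prove (a) by induction on $i$, simultaneously establishing that the induced group $G_C^C$ coincides with the full stabiliser in $\Sym(C)$ of the chain $\Ch(C)$ of \eqref{eq:chC} and that it has the claimed wreath-product structure; parts (b) and (c) will then follow by iterating the level-by-level decomposition, with (c) being the special case $C=\PP$, $i=s$ (where $G_\PP=G$). Write $S(C)$ for the stabiliser in $\Sym(C)$ of $\Ch(C)$. One inclusion is immediate: since $G$ preserves every partition $\C_0,\ldots,\C_s$ of $\PP$, the setwise stabiliser $G_C$ preserves each induced partition $\C_0(C),\ldots,\C_i(C)$ of $C$, so $G_C^C\leq S(C)$. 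The work is to prove the reverse inclusion and to compute $S(C)$.

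First I would record the recursive structure of $S(C)$ itself. The sub-blocks $\C_{i-1}(C)$ form one of the partitions in $\Ch(C)$, so every element of $S(C)$ permutes these $e_i$ classes, giving a homomorphism $S(C)\to\Sym(\C_{i-1}(C))\cong S_{e_i}$. Its kernel consists of the chain-preserving permutations of $C$ that fix each sub-block $C'\in\C_{i-1}(C)$ setwise, and such a permutation is precisely an independent choice of an element of $S(C')$ on each $C'$; hence the kernel is $\prod_{C'\in\C_{i-1}(C)}S(C')$. The homomorphism is surjective because the sub-blocks all carry isomorphic induced chains, so any permutation of them is realised by a single chain-preserving permutation of $C$ built from block-identifications. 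This exhibits $S(C)=\big(\prod_{C'\in\C_{i-1}(C)}S(C')\big)\rtimes S_{e_i}$, and by the inductive hypothesis each $S(C')\cong S_{e_1}\wr\cdots\wr S_{e_{i-1}}$, giving $S(C)\cong(S_{e_1}\wr\cdots\wr S_{e_{i-1}})\wr S_{e_i}=S_{e_1}\wr\cdots\wr S_{e_i}$, with base case $i=1$ where $\Ch(C)$ is trivial and $S(C)=\Sym(C)\cong S_{e_1}$.

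Next I would show that $G_C^C$ already realises all of $S(C)$, using the explicit level-wise description of the wreath-product action from \cite[Subsection 2.3]{chainspaper} and \cite{BPRS}. On one hand, the symmetric-group factor $S_{e_i}$ of $G$ that permutes the sub-blocks $\C_{i-1}(C)$ induces the full group $\Sym(\C_{i-1}(C))\cong S_{e_i}$ on them, so $G_C^C$ surjects onto $S_{e_i}$; this proves the second assertion of (a), that $H(C)^{\C_{i-1}(C)}\cong S_{e_i}$. On the other hand, for each sub-block $C'\in\C_{i-1}(C)$ the stabiliser $G_{C'}$ (contained in $G_C$) induces, by the inductive hypothesis, all of $G_{C'}^{C'}=S(C')$ on $C'$, and these elements can be taken to fix the remaining sub-blocks pointwise since they lie in the base group of $G$ at level $i$. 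Hence $G_C^C\geq\big(\prod_{C'}S(C')\big)\rtimes S_{e_i}=S(C)$, and combined with the earlier inclusion we obtain $G_C^C=S(C)=:H(C)\cong S_{e_1}\wr\cdots\wr S_{e_i}$, completing (a).

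For (b) I would apply the same decomposition but grouping the sub-blocks at level $j$ rather than at level $i-1$: $H(C)$ permutes the classes of $\C_j(C)$, the kernel of this action is $\prod_{C'\in\C_j(C)}H(C')$, and the induced group $H(C)^{\C_j(C)}$ is the full chain-stabiliser on the quotient chain $\C_j(C)\prec\ldots\prec\C_i(C)$, which by the structure just established is $S_{e_{j+1}}\wr\cdots\wr S_{e_i}$; the count $|\C_{j-1}(C)|=c_i/c_{j-1}=\prod_{\ell=j}^i e_\ell$ follows from the class sizes. Part (c) is then the instance $i=s$, $C=\PP$, $H(\PP)=G$. The main obstacle I anticipate is the reverse inclusion $S(C)\leq G_C^C$ --- that the induced group is the \emph{full} chain-stabiliser rather than a proper subgroup --- which rests on the surjectivity of $G_C^C$ onto $S_{e_i}$ and the fullness of the base product; both come from the level-wise symmetric-group generators of the wreath action, so the care needed is in invoking that explicit structure correctly and in matching the iterated semidirect-product factorisations across all levels $j$.
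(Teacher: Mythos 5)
Your proposal is correct, and the underlying decomposition --- splitting the chain stabiliser of $C$ as the kernel of its action on $\C_{i-1}(C)$ (a direct product of the stabilisers of the induced sub-chains) extended by the full symmetric group $S_{e_i}$ on the sub-blocks --- is exactly the structure the paper uses. The difference is one of self-containedness: the paper's proof of (a) and (b) is essentially a citation, deferring both the identification of the full chain stabiliser with the iterated wreath product and the kernel/quotient factorisation to \cite[Theorem 2.1]{chainspaper} and the discussion preceding it, whereas you re-derive that imported result from scratch by induction on the chain length. Your induction is sound: the inclusion $G_C^C \leq S(C)$ is immediate from invariance of the partitions; the reverse inclusion correctly rests on the two facts that the level-$i$ factor of $G$ induces all of $\Sym(\C_{i-1}(C))$ and that, for each $C' \in \C_{i-1}(C)$, elements inducing the full $S(C')$ on $C'$ can be chosen to fix the remaining sub-blocks pointwise (both of which do require invoking the explicit wreath-product action, as you flag). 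What your route buys is independence from the companion paper; what the paper's route buys is brevity and avoidance of re-proving a result already established in the same framework and notation. Parts (b) and (c) are handled identically in both treatments, with (c) the specialisation $C = \PP$, $i = s$.
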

\begin{proof}
(a) This follows from \cite[Theorem 2.1 (i)]{chainspaper} applied to $\C(C)$
, and from \cite[Theorem 2.1 (ii)]{chainspaper} applied to $H(C)^{\C_{j-1}(C)}$.

(b) By the discussion before \cite[Theorem 2.1]{chainspaper} applied to $H(C)$, the kernel $H(C)_{(\C_j(C))}$ of the $H(C)$-action on $\C_j(C)$ is $H(C)_{(\C_j(C))} = \prod_{C' \in \C_j} H(C') \cong (S_{e_1} \wr \ldots \wr S_{e_j})^{|\C_j(C)|}$, and the induced subgroup $H(C)^{\C_j(C)}$ on $\C_j(C)$ is $H(C)^{\C_j(C)} \cong H(C)/H(C)_{(\C_j(C))} \cong S_{e_{j+1}} \wr \ldots \wr S_{e_i}$, where $|\C_j(C)| = \prod_{\ell=j+1}^i e_\ell$. Thus $H(C) \cong H(C)_{(\C_j(C))} \rtimes H(C)^{\C_j(C)}$, and statement (b) follows.

(c) This follows by applying (b) to the case where $i=s$, since in this case $C = \PP$ and $H(C) = G$.
\end{proof}

\subsection{Array of a subset} \label{subsec:array}

The \emph{array function} of a nonempty subset $B$ of $\PP$, with respect to the chain $\Ch$ of partitions, is the function $\chi_B : \bigcup_{i=1}^s \C_i \rightarrow \mathbb{Z}_{\geq 0}$ defined by
    \begin{equation} \label{array}
    \chi_B(C) := \left| B \cap C \right| \quad \text{for each $C \in \C_i$ and each $i \in \{1, \ldots, s\}$.}
    \end{equation}
In particular, for $i=s$ we have $\chi_B(C) = \chi_B(\PP) = |B|$. For brevity, and when there is no ambiguity, we shall frequently use the notation $x_C := \chi_B(C)$, or, more specifically,
    \begin{equation}\label{def:x}
    x_{(\delta_j)_{j>i}} := \chi_B\big(C_{(\delta_j)_{j>i}}\big)
    \end{equation}
for $C_{(\delta_j)_{j>i}}$ as  in \eqref{part}. For a subgroup $G \leq S_{e_1} \wr \ldots \wr S_{e_s}$ which leaves invariant each partition in the chain $\Ch$, we say that two array functions $\chi_B$ and $\chi_{B'}$ are \emph{equivalent under $G$} if $\chi_B^g = \chi_{B'}$ for some $g \in G$, where
    \begin{equation} \label{chi^g}
    \chi_B^g(C) := \chi_B\big(C^{g^{-1}}\big) \quad \text{for all} \ C \in \bigcup_{i=1}^s \C_i. 
    \end{equation}
By \cite[Lemma 2.4]{chainspaper}, if $G = S_{e_1} \wr \ldots \wr S_{e_s}$ then the $G$-orbit of $B \subseteq \PP$ consists of all $B' \subseteq \PP$ whose array function $\chi_{B'}$ is equivalent to $\chi_B$ under $G$.

\subsection{Block-transitive, chain-imprimitive $2$-designs}

Consider a point-block incidence structure $\D = (\PP,\B)$, where $\PP = \prod_{i=1}^s \mathbb{Z}_{e_i}$ and $\B = B^G$, for some nonempty $B \subseteq \PP$ and with $G = S_{e_1} \wr \ldots \wr S_{e_s}$ leaving invariant an $s$-chain $\Ch$ of partitions $\C_i$ of $\PP$ as in \eqref{partn}. Then $\D$ is $G$-block-transitive and $(G,s)$-chain-imprimitive. Theorem 1.3 in \cite{chainspaper}, stated below, gives necessary and sufficient conditions on the array $\chi_B$ in order for $\D$ to be a $2$-design. Recall that for $C \in \C_{i-1}$, $C^+$ denotes the unique $\C_i$-class containing $C$.

\begin{theorem} \cite[Theorem 1.3]{chainspaper} \label{mainthm:2des}
Let $G = S_{e_1} \wr \ldots \wr S_{e_s}$ preserving $s$-chain $\Ch$ as in \eqref{partn}. Let $\D = (\PP,\B)$ where $\B = B^G$ for some $k$-subset $B$ of $\PP$. For any $C \in \bigcup_{i=1}^s \C_i$ let $x_C = |B \cap C|$. Then $\D$ is a $2$-design if and only if
    \begin{equation} \label{2des-1}
    \sum_{C \in \C_1} x_{C} \left( x_{C} - 1 \right)
    = \frac{k(k-1)}{v-1} (e_1 - 1)
    \end{equation}
and
    \begin{equation} \label{2des}
    \text{for each $i \in \{2, \ldots, s\}$,} \quad \sum_{C \in \C_{i-1}} x_C \left( x_{C^+} - x_C \right) = \frac{k(k-1)}{v-1} (e_i - 1) \prod_{j \leq i-1} e_j.
    \end{equation}
\end{theorem}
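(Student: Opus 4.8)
The plan is to prove Theorem~\ref{mainthm:2des} by directly counting, for the candidate design $\D=(\PP,B^G)$, the number of blocks through a fixed pair of distinct points, and to show that this count is constant (independent of the pair) precisely when the stated equations \eqref{2des-1} and \eqref{2des} hold. Since $G=S_{e_1}\wr\dots\wr S_{e_s}$ is transitive on $\PP$ and the design is $G$-block-transitive, every block has size $k$, so $\D$ is automatically a $1$-design; the only thing to verify is the $2$-design condition, namely that $\lambda_{(\alpha,\beta)}:=|\{B'\in\B : \alpha,\beta\in B'\}|$ is the same for all pairs $\{\alpha,\beta\}$.

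The key observation that organises the count is that, by \cite[Lemma 2.4]{chainspaper}, the $G$-orbit $B^G$ consists of \emph{all} subsets whose array function is equivalent to $\chi_B$ under $G$; equivalently, one can count flags or incident pairs by a standard double-counting (orbit-counting) argument on the action of $G$. The natural invariant distinguishing a pair $\{\alpha,\beta\}$ is the smallest index $i$ for which $\alpha$ and $\beta$ lie in a common $\C_i$-class: call this the \emph{type} of the pair. By transitivity of $G$ on ordered pairs of each fixed type (which follows from the structure in Proposition~\ref{p:iterwr}), $\lambda_{(\alpha,\beta)}$ depends only on the type $i\in\{1,\dots,s\}$. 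So I would write $\lambda^{(i)}$ for this common value and express each $\lambda^{(i)}$ via a double count: the number of incident triples (pair of type $i$, block containing it) is, on one hand, $\lambda^{(i)}$ times the number of pairs of type $i$, and on the other hand $|\B|$ times the average, over a block, of the number of within-block pairs of type $i$. The within-block pair counts are exactly the quantities $\sum_{C\in\C_1}x_C(x_C-1)$ for type $1$ and $\sum_{C\in\C_{i-1}}x_C(x_{C^+}-x_C)$ for type $i\ge 2$, since a pair of type $i$ consists of two points lying in a common $\C_i$-class but in distinct $\C_{i-1}$-classes.

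Writing out the double count, the number of pairs of type $i$ in $\PP$ is $\tfrac12\,v\,(e_i-1)\prod_{j\le i-1}e_j$ for $i\ge 2$ and $\tfrac12\,v\,(e_1-1)$ for $i=1$, while the number of within-block type-$i$ pairs summed appropriately over the block gives the left-hand sides of \eqref{2des-1} and \eqref{2des}. Setting all the $\lambda^{(i)}$ equal to a single value $\lambda$ and using $\lambda\binom{v}{2}=|\B|\binom{k}{2}$ (the global $2$-design count, equivalently $\lambda(v-1)=r(k-1)$ with replication number $r$), I would solve for the common ratio and find that the requirement $\lambda^{(i)}=\lambda$ for all $i$ is equivalent to the right-hand sides being $\tfrac{k(k-1)}{v-1}$ times the respective pair-counts, which is exactly the asserted system. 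The main obstacle, and the step requiring the most care, is justifying that $\lambda_{(\alpha,\beta)}$ truly depends only on the type $i$ and not on finer data about $\alpha,\beta$: this is where I would invoke Proposition~\ref{p:iterwr}(c) to show that the stabiliser structure $G=\big(\prod_{C'\in\C_j}H(C')\big)\rtimes G^{\C_j}$ acts transitively on ordered pairs of each fixed type, so that the averaging is genuinely independent of the chosen representative pair. Once this transitivity is in hand, the equivalence between the $2$-design property and the two displayed equations is a routine, if slightly intricate, bookkeeping of the double counts.
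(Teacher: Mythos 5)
Your argument is correct: the $G$-orbits on ordered pairs of distinct points are exactly the ``types'' $i\in\{1,\dots,s\}$ (the least $i$ with both points in a common $\C_i$-class), the number of within-block ordered pairs of type $i$ is precisely the left-hand side of \eqref{2des-1} (for $i=1$) or \eqref{2des} (for $i\ge 2$), and since the number of ordered type-$i$ pairs in $\PP$ is $v(e_i-1)\prod_{j\le i-1}e_j$, the double count $\lambda^{(i)}\cdot v(e_i-1)\prod_{j\le i-1}e_j=|\B|\cdot n_i$ together with $\sum_i n_i=k(k-1)$ and $\sum_i v(e_i-1)\prod_{j\le i-1}e_j=v(v-1)$ yields both directions of the equivalence exactly as you describe. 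Note that the present paper does not prove Theorem~\ref{mainthm:2des} but imports it from \cite[Theorem 1.3]{chainspaper}; your orbit/double-counting argument is essentially the proof given there (and, for $s=2$, in \cite{CP93}), so your proposal matches the intended approach.
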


Now, $C_{(\delta_j)_{j>i-1}}^+ = C_{(\delta_j)_{j>i}}$ by \eqref{part}. Thus, using notation \eqref{def:x}, we can rewrite condition \eqref{2des} as
    \begin{equation*} 
    \sum_{(\delta_j)_{j>i-1} \in \prod_{j>i-1} \mathbb{Z}_{e_j}} x_{(\delta_j)_{j>i-1}} \left( x_{(\delta_j)_{j>i}} - x_{(\delta_j)_{j>i-1}} \right)
    = \frac{k(k-1)}{v-1} (e_i - 1) \prod_{j \leq i-1} e_j.
    \end{equation*}

\section{Point subsets with uniform array functions} \label{sec:arrays}

The theme of the paper is $G$-flag-transitive designs, where $G$ is chain-imprimitive on the point set $\PP$ with respect to a chain $\Ch$ as in \eqref{eq:chain}. If $B$ is a block of such a design then the setwise stabiliser $G_B$ is transitive on $B$. This means in particular that,   for each $i \in \{0, \ldots, s\}$, every $\C_i$-class $C$ which contains elements of $B$ must contain a constant number of elements of $B$. In other words, there exist positive integers $y_i$, depending only on $i$, such that for each $i \in \{0, \ldots, s\}$ and each $C \in \C_i$, $\chi_B(C)=  \big| B \cap C \big| = x_{C} \in \{0, y_i\}$.
\begin{definition} \label{unifseq}
A subset $B$ of $\PP$ is \emph{uniform} relative to $\Ch$, with \emph{uniform sequence} $(y_0, y_1, \ldots, y_s)$, if for each $i \in \{0, \ldots, s\}$ and each $C \in \C_i(B)$, $\big| B \cap C \big| = y_i$.
\end{definition}

Note that for $i=0$ the class $C$ has a unique element, say $\delta$, and $\chi_B(C) = 0$ or $1$ according to whether $\delta \in B$ or $\delta \notin B$, so $y_0=1$; while for $i=s$ we have $C = \PP$, and so $\chi_B(\PP) = |B| = k$ and thus $y_s = k$. Our next result derives further restrictions on a uniform sequence and shows that all uniform subsets with the same uniform sequence are in the same orbit under the stabiliser of the chain $\Ch$.


\begin{lemma} \label{lem:unif}
Let $s,e_1,\dots,e_s$ be integers, all at least $2$, let $\PP = \prod_{i=1}^s \mathbb{Z}_{e_i}$, let $\Ch$ be a chain of nontrivial partitions of $\PP$, as in \eqref{eq:chain} with parts as in \eqref{partn}, and chain stabiliser $G = S_{e_1} \wr \ldots \wr S_{e_s}$. For each $i \in \{0, \ldots, s\}$, let $y_i \in \mathbb{Z}^+$.
    \begin{enumerate}[(a)]
    \item There exists a nonempty subset $B$ of $\PP$ which is uniform relative to $\Ch$ with uniform sequence $(y_0, \ldots, y_s)$, if and only if the following hold: 
    \[
    \hbox{(i) $y_0 = 1$, $y_s=|B|$, \quad and \quad (ii) for all $i \in \{1, \ldots, s\}$, $y_{i-1} \mid y_i$ with $\frac{y_i}{y_{i-1}} \leq e_i$.}
    \]
    \item If conditions (i) and (ii) in part (a) hold, then the subset
        \begin{equation} \label{B-unif}
        B := \left\{ (\delta_i)_{i=1}^s \in \PP \ \ \vline \ \ 0 \leq \delta_i \leq \frac{y_i}{y_{i-1}} - 1 \ \text{for }\ 1 \leq i \leq s \right\}
        \end{equation}
    is uniform relative to $\Ch$ with uniform sequence $(y_0, \ldots, y_s)$.     Moreover, a subset of $\PP$ has these properties if and only if it lies in the $G$-orbit of $B$.
    \end{enumerate}
\end{lemma}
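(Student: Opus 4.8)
The plan is to prove Lemma~\ref{lem:unif} in two pieces, establishing (a) and (b) together, since the explicit set in (b) will serve as the witness for the ``if'' direction of (a).

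First I would prove the necessity of conditions (i) and (ii) in part (a). Condition (i) is already argued in the discussion preceding Definition~\ref{unifseq}: for $i=0$ each class is a singleton so $y_0=1$, and for $i=s$ the unique class is $\PP$ so $y_s=|B|$. For condition (ii), suppose $B$ is uniform with sequence $(y_0,\dots,y_s)$ and fix $i\in\{1,\dots,s\}$ and a class $C\in\C_i(B)$. Since $C$ contains points of $B$, we have $|B\cap C|=y_i$. Now $C$ is partitioned into its $e_i$ subclasses $C'\in\C_{i-1}(C)$, and each such $C'$ either misses $B$ entirely or satisfies $|B\cap C'|=y_{i-1}$ by uniformity. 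Writing $m$ for the number of $\C_{i-1}$-subclasses of $C$ that meet $B$, we get $y_i=|B\cap C|=\sum_{C'\subseteq C}|B\cap C'|=m\,y_{i-1}$. Hence $y_{i-1}\mid y_i$, and since $m=y_i/y_{i-1}$ counts distinct subclasses of $C$ we have $m\le|\C_{i-1}(C)|=e_i$, giving $y_i/y_{i-1}\le e_i$. This is exactly (ii).

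Next I would prove part (b), which simultaneously delivers the sufficiency direction of (a). Assuming (i) and (ii), set $m_i:=y_i/y_{i-1}$, so $1\le m_i\le e_i$, and let $B$ be the explicit ``box'' subset in \eqref{B-unif}. A direct counting argument shows $B$ is uniform: for a class $C_{(\delta_j)_{j>i}}\in\C_i$, the intersection $B\cap C_{(\delta_j)_{j>i}}$ is nonempty precisely when the fixed coordinates satisfy $0\le\delta_j\le m_j-1$ for all $j>i$, and when nonempty it consists of all tuples whose first $i$ coordinates range freely over $\{0,\dots,m_j-1\}$, so $|B\cap C|=\prod_{j=1}^i m_j=y_i$ by telescoping. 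Thus $B$ is uniform with the prescribed sequence, which proves the ``if'' direction of (a). For the final claim of (b), I would invoke the array-equivalence machinery from Subsection~\ref{subsec:array}: by \cite[Lemma 2.4]{chainspaper}, the $G$-orbit of $B$ consists of exactly those $B'$ whose array function $\chi_{B'}$ is equivalent to $\chi_B$ under $G$. So it suffices to show that a subset $B'$ is uniform with sequence $(y_0,\dots,y_s)$ if and only if $\chi_{B'}$ is $G$-equivalent to $\chi_B$. One direction is immediate since $G$ preserves each $\C_i$ and hence preserves uniformity and the sequence. For the converse I would argue that any two uniform subsets with the same sequence have $G$-equivalent arrays, proceeding by induction on $s$ (or downward through the chain): using the decomposition $G\cong\big(\prod_{C'\in\C_{s-1}}H(C')\big)\rtimes G^{\C_{s-1}}$ from Proposition~\ref{p:iterwr}(c), the top-level factor $G^{\C_{s-1}}\cong S_{e_s}$ can be used to move the $m_s$ nonempty $\C_{s-1}$-classes of $B'$ onto the $m_s$ nonempty $\C_{s-1}$-classes of $B$, after which the within-class factors $H(C')$ recursively match the arrays on each class, the inductive hypothesis applying to each $C'$ via the sub-chain $\Ch(C')$ of \eqref{eq:chC}.

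**The main obstacle** I expect is the final equivalence in (b): verifying rigorously that all uniform subsets with a common uniform sequence lie in a single $G$-orbit. The counting and the divisibility conditions are routine, but the orbit statement requires carefully threading the wreath-product structure of Proposition~\ref{p:iterwr} through an induction, ensuring that the permutation chosen at the top level (to align nonempty classes) is genuinely compatible with the base-group permutations chosen inside each class, so that their product is a single element of $G$ carrying $\chi_{B'}$ to $\chi_B$. Organising this induction cleanly---so that the recursive step on each class $C'\in\C_{s-1}$ genuinely reduces to the lemma for the shorter chain $\Ch(C')$ with parameters $e_1,\dots,e_{s-1}$---is where the real care is needed.
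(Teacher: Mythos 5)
Your proposal is correct and follows essentially the same route as the paper: the same counting argument for necessity of (i) and (ii), the same explicit ``box'' set with a telescoping product for sufficiency, and the same strategy for the orbit claim, namely aligning the nonempty classes level by level using the semidirect decomposition of Proposition~\ref{p:iterwr} ($S_{e_s}$ on top, then the factors $H(C')$ inside each class). The only cosmetic difference is that you organise the orbit argument as an induction on the chain length $s$ recursing into sub-chains $\Ch(C')$, whereas the paper runs a downward induction on the partition level $j$, building a single element $g\in G$ with $\C_j(B)^g=\C_j(B')$ at each stage; these are interchangeable.
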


\begin{proof}
(a) Assume that there is a nonempty subset $B$ of $\PP$ such that $B$ is uniform relative to $\Ch$ with uniform sequence $(y_0, \ldots, y_s)$. Since all $\C_0$-classes have size $1$ and $B$ is non-empty, we have that $y_0=1$, and since $\C_s=\{\PP\}$ we also have $y_s=|B\cap \PP|=|B|$. This proves condition (i). Let $1\leq i\leq s$ and let $C$ be a $\C_i$-class intersecting $B$ non-trivially, so that $x_C=y_i$. Consider the set $\C_{i-1}(C)$ of all $\C_{i-1}$-classes contained in $C$. For each $C'\in \C_{i-1}(C)$, $x_{C'}\in\{0,y_{i-1}\}$, and $x_C=\sum_{C'\in \C_{i-1}(C)}x_{C'}$. It follows that  $y_{i-1}\mid y_i$. Recall that $|\C_{i-1}(C)| = e_i$ so, if $C'\subset C$ is such that $x_{C'}=y_{i-1}$, then we have  $x_C \leq e_i x_{C'}$ and thus $\frac{y_i}{y_{i-1}} = \frac{x_C}{x_{C'}} \leq e_i$, which is condition (ii). Therefore (i) and (ii) hold whenever some subset $B$ is uniform relative to $\Ch$ with uniform sequence $(y_0, \ldots, y_s)$.

Conversely assume that conditions (i) and (ii) hold. Note that from condition (ii), $0 \leq \frac{y_i}{y_{i-1}} - 1 < e_i$. Hence if $(\delta_i)_{i=1}^s$ satisfies the defining conditions for $B$ then $(\delta_i)_{i=1}^s \in \prod_{i=1}^s \mathbb{Z}_{e_i} = \PP$. The array function $\chi_B$ of $B$ with respect to $\Ch$ has value $\chi_B\big(C_{(\delta_j)_{j>i}}\big) = \big|\big\{ (\varepsilon_j)_{j=1}^s \in B \ \big| \ \varepsilon_j = \delta_j \ \text{for all} \ j > i \big\}\big|$, which, using conditions (i) and (ii), can be computed as
    \begin{equation} \label{arrayB'}
    \chi_B\big(C_{(\delta_j)_{j>i}}\big)
    = \begin{cases}
      \prod_{\ell=1}^i \frac{y_\ell}{y_{\ell-1}} = y_i &\text{if $\delta_j < \frac{y_j}{y_{j-1}}$ for all $j > i$}, \\
      0 &\text{otherwise}.
    \end{cases}
    \end{equation} 
Therefore $B$ is uniform relative to $\Ch$ with uniform sequence $(y_0, \ldots, y_s)$. This completes the proof of statement (a), and also proves the first part of statement (b).
(b) Let $\B = B^G$ with $B$ as in \eqref{B-unif}. Assume first that $B' \in \B$; so $B'=B^g$ for some $g\in G$. By \cite[Lemma 2.5 (a)]{chainspaper}, the array function $\chi_{B'} = \chi^g_B$ where $\chi^g_B$ is as defined in \eqref{chi^g}. Hence, for each $C \in \bigcup_{i=1}^s \C_i$, say $C\in\C_i$ so also $C^{g^{-1}}\in\C_i$, we have $\chi_{B'}(C)=\chi_{B}^g(C) = \chi_B\big(C^{g^{-1}}\big)$ which lies in $\{0,y_{i}\}$ since $B$ has uniform sequence $(y_0, \ldots, y_s)$. Thus $B'$ is also uniform relative to $\Ch$ with uniform sequence $(y_0, \ldots, y_s)$. 


Conversely, let $B'$ be a uniform subset of $\PP$ relative to $\Ch$ with uniform sequence $(y_0, \ldots, y_s)$; we will show that $B' \in \B$. 

\smallskip\noindent
\emph{Claim. For every $j \in \{0, \ldots, s-1\}$, there exists $g \in G$ such that $\C_j(B)^g = \C_j(B')$. }

We prove the claim by induction on $\ell = s-j$, for $\ell \in \{1, \ldots, s\}$. Suppose first that $\ell = 1$, so that $j=s-1$. By \eqref{C(X)}, $|\C_{s-1}(B)|$ is the number of classes of $\C_{s-1}$ that intersect $B$ nontrivially. Since $B$ and $B'$ both have uniform sequence $(y_0, \ldots, y_s)$, it follows from Definition~\ref{unifseq} and Lemma~\ref{lem:unif} that $|B|=|B'|=k=y_s$ and $|B\cap C|=y_{s-1}$ for $C\in \C_{s-1}(B)$, while  $|B'\cap C|=y_{s-1}$ for $C\in \C_{s-1}(B')$. Thus  $|\C_{s-1}(B)| = |\C_{s-1}(B')|= \frac{y_s}{y_{s-1}}$. Then since $G$  induces $G^{\C_{s-1}} \cong S_{e_s}$  on $\C_{s-1}$ (Proposition \ref{p:iterwr}(b)), there exists $g \in G$ such that  $g^{\C_{s-1}}$ maps $\C_{s-1}(B)$ to $\C_{s-1}(B')$. Therefore the claim holds for $j=s-1$, that is, for $\ell=1$.



Let $\ell \in \{2, \ldots, s\}$ and assume inductively that the claim holds for $\ell-1$, that is, there exists $g \in G$ such that $\C_{j}(B)^g = \C_{j}(B')$ where $j := s-\ell+1$. Note that $\C_j(B)^g = \C_j(B^g)$ by \eqref{C(X)}. We proved above that $B^g$ is uniform relative to $\Ch$ with uniform sequence $(y_0, \ldots, y_s)$. Thus for any $C \in \C_j(B^g) = \C_j(B')$ we have $|\C_{j-1}(B^g \cap C)| = |\C_{j-1}(B' \cap C)| = \frac{y_j}{y_{j-1}}$. By Proposition \ref{p:iterwr}(a), the group $H(C) = G_C^C \cong S_{e_1} \wr \ldots \wr S_{e_j}$, and $H(C)^{\C_{j-1}(C)} \cong S_{e_j}$. Hence there exists an element $h_C \in H(C)$ such that $\C_{j-1}(B^g \cap C)^{h_C}=\C_{j-1}(B' \cap C)$, and we may view $h_C$ as a permutation of $\PP$ (and hence an element of $G$) fixing pointwise each of the parts of $\C_j\setminus\{C\}$. Such an element $h_C$ exists for each of the parts $C \in \C_j(B^g)$, and the product of these elements $h_C$ over all $C \in \C_j(B^g)$ is an element of $G$ that maps $\C_{j-1}(B^g)$ (which is the union over $C \in \C_j(B^g)$ of $\C_{j-1}(B^g\cap C)$) to $\C_{j-1}(B')$. Since $j-1 = s-\ell$, the claim is proved for $\ell$.


Therefore the claim is proved by induction for all $\ell \in \{1, \ldots, s\}$.
In particular, for $\ell=s$, there exists $g \in G$ such that $\C_0(B^g) = \C_0(B')$. Since $\C_0(B^g) = \binom{B^g}{1}$ and $\C_0(B') = \binom{B'}{1}$,  it follows that $B^g = B'$. Therefore $B' \in \B$, which completes the proof of part (b).
\end{proof}

Recall from Section~\ref{subsec:array} that the array functions $\chi_B$ and $\chi_{B'}$ of two subsets $B, B'\subseteq \PP$ are equivalent under $G$ if $\chi_B^g = \chi_{B'}$ for some $g \in G$. Further, by \cite[Lemma 2.5(b)]{chainspaper}, if $G = S_{e_1} \wr \ldots \wr S_{e_s}$ then the $G$-orbit of $B \subseteq \PP$ consists of all $B' \subseteq \PP$ whose array function $\chi_{B'}$ is equivalent to $\chi_B$ under $G$. 

\begin{lemma} \label{lem:b}
Let $s,e_1,\dots,e_s$ be integers, all at least $2$, let $\PP = \prod_{i=1}^s \mathbb{Z}_{e_i}$, and let $\Ch$ be a chain of nontrivial partitions of $\PP$, as in \eqref{eq:chain} with chain stabiliser $G = S_{e_1} \wr \ldots \wr S_{e_s}$. Let $\B = B^G$ where $B \subseteq \PP$ is uniform relative to $\Ch$ with uniform sequence $(y_0, \ldots, y_s)$. Then
    \begin{equation} \label{eq:b}
    |\B| = \prod_{j=1}^s \binom{e_j}{\frac{y_j}{y_{j-1}}}^{k/y_j}.
    \end{equation}
\end{lemma}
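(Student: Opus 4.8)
The plan is to count the number of subsets $B'$ in the orbit $\B = B^G$ by counting how many uniform subsets with uniform sequence $(y_0, \ldots, y_s)$ exist, since by Lemma~\ref{lem:unif}(b) these coincide with the members of $\B$. Rather than use an orbit-stabiliser computation on $|G|/|G_B|$ directly (which would require identifying the stabiliser structure), I would count the uniform subsets combinatorially by building one up level by level through the chain, choosing at each level which classes are ``occupied'' by $B'$.

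First I would recall from Lemma~\ref{lem:unif} that a uniform subset $B'$ with sequence $(y_0, \ldots, y_s)$ is completely determined by specifying, for each $i$, which $\C_i$-classes meet $B'$, subject to the compatibility that the occupied $\C_{i-1}$-classes inside an occupied $\C_i$-class are exactly $y_i/y_{i-1}$ in number. Concretely, I would count from the top down: at level $s$ there is one class $\PP$, which is occupied. Descending from an occupied $\C_i$-class $C$ to the level below, $C$ contains exactly $e_i$ classes of $\C_{i-1}$ (since $|\C_{i-1}(C)| = e_i$), and uniformity forces exactly $y_i/y_{i-1}$ of these to be occupied, with no further constraint on which ones. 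So there are $\binom{e_i}{y_i/y_{i-1}}$ independent choices for each occupied $\C_i$-class.

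Next I would count the number of occupied classes at each level. Since $|B' \cap C| = y_i$ for each occupied $C \in \C_i$ and $|B'| = y_s = k$, the number of occupied $\C_i$-classes is $|\C_i(B')| = k/y_i$. At level $i$ in the descent, each of the $k/y_i$ occupied $\C_i$-classes independently contributes a factor $\binom{e_i}{y_i/y_{i-1}}$, giving a total factor of $\binom{e_i}{y_i/y_{i-1}}^{k/y_i}$ for that level. Multiplying over all levels $i = 1, \ldots, s$ yields the claimed product
    \[
    |\B| = \prod_{j=1}^s \binom{e_j}{\tfrac{y_j}{y_{j-1}}}^{k/y_j},
    \]
and I would note that each choice produces a distinct uniform subset and every uniform subset arises exactly once, so the count is exact.

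The main obstacle is making rigorous the independence claim underlying the product formula, namely that the choices of occupied sub-classes within distinct occupied classes at a given level are genuinely independent and that the count of occupied classes $k/y_i$ is itself consistent (it must be an integer, which follows from $y_i \mid y_s = k$ via condition (ii) of Lemma~\ref{lem:unif}(a)). The cleanest way to secure this is a formal induction on $s$, or equivalently on the chain length, applying Proposition~\ref{p:iterwr}(a) to each occupied class $C \in \C_i$ to see that the local choices inside $C$ are governed by the action of $H(C) \cong S_{e_1} \wr \ldots \wr S_{e_i}$ and are unaffected by choices made in sibling classes; the decomposition $H(C)^{\C_{i-1}(C)} \cong S_{e_i}$ acting on the $e_i$ sub-classes confirms that all $\binom{e_i}{y_i/y_{i-1}}$ selections lie in a single $G$-orbit and hence all occur. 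Care is needed to verify that two uniform subsets differing in their occupied-class data are genuinely distinct subsets of $\PP$, which follows since the level-$0$ data $\C_0(B') = \binom{B'}{1}$ recovers $B'$ itself.
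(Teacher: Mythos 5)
Your proof is correct and reaches the same formula, but it is organised differently from the paper's argument. You use Lemma~\ref{lem:unif}(b) once, globally, to identify $\B$ with the set of all uniform subsets having sequence $(y_0,\ldots,y_s)$, and then enumerate these directly via a top-down bijection with ``occupied-class systems'': at level $j$ each of the $k/y_j$ occupied $\C_j$-classes independently admits $\binom{e_j}{y_j/y_{j-1}}$ choices of occupied sub-classes. The paper instead runs a bottom-up induction on $i$, computing the orbit sizes $\big|(B\cap C)^{H(C)}\big|$ for $C\in\C_i(B)$ by means of the semidirect decomposition of $H(C)$ from Proposition~\ref{p:iterwr}(b); unrolling that recursion yields exactly your product, so the two counts are the same calculation packaged differently. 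Your route is the more elementary of the two: once the orbit count is converted into a count of uniform subsets, no further group theory is required, and your appeal to Proposition~\ref{p:iterwr} to ``secure independence'' is dispensable --- the independence is just the product rule, provided you make explicit (i) that the number of occupied $\C_j$-classes resulting from the choices at levels $s,\dots,j+1$ equals $\prod_{i=j+1}^{s}(y_i/y_{i-1})=y_s/y_j=k/y_j$ regardless of which choices were made, which is what licenses raising each binomial to a fixed exponent, and (ii) the converse direction of your bijection, namely that every compatible occupied-class system does produce a uniform subset, via the one-line bottom-up computation $|B'\cap C|=(y_j/y_{j-1})\cdot y_{j-1}=y_j$. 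Neither point is a gap, only a detail to write out; with them included your argument is a complete and somewhat shorter proof of \eqref{eq:b}.
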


\begin{proof}
As in Proposition \ref{p:iterwr}, we define the group $H(C) = G_C^C$ for any class $C \in \bigcup_{j=1}^s \C_j$. Note that if $C \in \C_i(B)$, then $B \cap C$ is uniform relative to $\C(C)$ with uniform sequence $(y_0, \ldots, y_i)$, since for each $j \in \{1, \ldots, i\}$ and each $C' \in \C_j(C)$, $\chi_{B \cap C}(C') = |B \cap C \cap C'| = |B \cap C'| = \chi_B(C') \in \{0,y_j\}$.

\smallskip\noindent
\emph{Claim 1. For any $i \in \{1, \ldots, s\}$ and any $C \in \C_i(B)$, the orbit $(B \cap C)^{H(C)}$ is the set of all subsets of $C$ that have uniform sequence $(y_0, \ldots, y_i)$ relative to $\C(C)$.} 

If $i=1$ then by Proposition \ref{p:iterwr}(a) the group $H(C) \cong S_{e_1}$. Hence $(B \cap C)^{H(C)}$ is the set of all subsets of $C$ that have size $y_1$, and thus have uniform sequence $(y_0,y_1)$ relative to $\C(C)$. Therefore the claim holds for $i=1$. Now assume that $i \geq 2$.
Using Lemma \ref{lem:unif}(b) (and by replacing $\PP$ in the lemma with the set $C$, the chain $\C$ with $\C(C)$, and the group $G$ with $H(C)$), we deduce that the orbit $(B \cap C)^{H(C)}$ consists of all subsets of $C$ that have size $y_i$ and are uniform relative to $\C(C)$, with uniform sequence $(y_0, \ldots, y_i)$. This proves Claim 1.

\smallskip
For $i \in \{1, \ldots, s\}$ let $b_i := \prod_{j=1}^i \binom{e_j}{y_j/y_{j-1}}^{y_i/y_j}$.

\smallskip\noindent
\emph{Claim 2. For any $i \in \{1, \ldots, s\}$ and any $C_i \in \C_i(B)$, $\big|(B \cap C_i)^{H(C_i)}\big| = b_i$.} 

We prove this by induction on $i$. If $i=1$ then by Claim 1 the orbit $(B \cap C_1)^{H(C_1)}$ is the set of all subsets of size $y_1$ of $C_1$; since $|C_1| = e_1$, the number of such subsets is $\binom{e_1}{y_1}$. Therefore, recalling that $y_0 = 1$, we have
    \[
    \big|(B \cap C_1)^{H(C_1)}\big| = \binom{e_1}{y_1} = \prod_{j=1}^1 \binom{e_1}{y_1/y_0}^{y_1/y_1} = b_1,
    \]
which shows that Claim 2 holds when $i=1$.

Suppose now that $i \geq 2$ and assume that $\big|(B \cap C_{i-1})^{H(C_{i-1})}\big| = b_{i-1}$ for any $C_{i-1} \in \C_{i-1}(B)$. By Claim 1, each subset $B' \in (B \cap C_i)^{H(C_i)}$ has size $y_i$, and is uniform relative to $\C(C_i)$ with uniform sequence $(y_0, \ldots, y_i)$. The set $B'$ is the disjoint union of subsets $B' \cap C_{i-1}$ for all $C_{i-1} \in \C_{i-1}(B')$, where, by the observation at the start of this proof, each subset $B' \cap C_{i-1}$ has size $y_{i-1}$ and is uniform relative to $\C(C_i)$ with uniform sequence $(y_0, \ldots, y_{i-1})$. Since $B' \cap C_{i-1} \subseteq C_{i-1}$, it follows from Claim 1 that $B' \cap C_{i-1} \in (B \cap C_{i-1})^{H(C_{i-1})}$. Conversely, note that any subset $B''$ from an orbit $(B \cap C_{i-1})^{H(C_{i-1})}$, for any $C_{i-1} \in \C_{i-1}(B \cap C_i)$, is uniform relative to $\C(C_i)$ with uniform sequence $(y_0, \ldots, y_{i-1})$. The number $|\C_{i-1}(B \cap C_i)|$ of $\C_{i-1}$-classes in $C_i$ that intersect $B$ non-trivially is $|\C_{i-1}(B \cap C_i)| = \frac{y_{i}}{y_{i-1}}$, so that $\big|\bigcup_{C_{i-1} \in \C_{i-1}(B \cap C_i)} B'' \big| = |B''| \cdot |\C_{i-1}(B \cap C_i)| = y_{i-1} \cdot y_i/y_{i-1}$. Thus, if we take one subset $B''$ from each of the orbits $(B \cap C_{i-1})^{H(C_{i-1})}$, where $C_{i-1} \in \C_{i-1}(B \cap C_i)$, then the union of these subsets $B''$ is a subset of $B \cap C_i$ of size $y_i$ that is uniform relative to $\C(C_i)$ with uniform sequence $(y_0, \ldots, y_i)$. By Proposition \ref{p:iterwr}(b) we deduce that
    \[
    H(C_i)
    \cong \left( \prod_{C \in \C_{i-1}(C_i)} H(C) \right) \rtimes H(C_i)^{\C_{i-1}(C_i)}
    \cong \left( \prod_{C \in \C_{i-1}(C_i)} H(C) \right) \rtimes S_{e_i}.
    \]
Since $B \cap C_i$ is the disjoint union over all $C \in \C_{i-1}(B \cap C_i)$ of subsets $B \cap C$, it follows that the orbit $(B \cap C_i)^{\prod_{C \in \C_{i-1}(C_i)} H(C)}$ is the disjoint union over all $C \in \C_{i-1}(B \cap C_i)$ of orbits $(B \cap C)^{H(C)}$. Hence
    \[
    (B \cap C_i)^{H(C_i)}
    = \left( \bigcup_{C \in \C_{i-1}(B \cap C_i)} (B \cap C)^{H(C)} \right)^{S_{e_i}}
    \]
and therefore, since $|\C_{i-1}(B \cap C_i)| = y_i/y_{i-1}$,
    \[
    \big| (B \cap C_i)^{H(C_i)} \big|
    = \big| (B \cap C)^{H(C)} \big|^{y_i/y_{i-1}} \binom{e_i}{y_i/y_{i-1}}
    \quad \text{for any $C \in \C_{i-1}(B \cap C_i)$.}
    \]
By the induction hypothesis $\big| (B \cap C)^{H(C)} \big| = b_{i-1}$. Substituting this into the equation above, and applying the definition of $b_{i-1}$, we obtain 
    \[
    \big| (B \cap C_i)^{H(C_i)} \big|
    = \Bigg(\prod_{j=1}^{i-1} \binom{e_j}{y_j/y_{j-1}}^{y_{i-1}/y_{j}}\Bigg)^ {y_i/y_{i-1}} \cdot \binom{e_i}{y_i/y_{i-1}}
    = \prod_{j=1}^i \binom{e_j}{y_j/y_{j-1}}^{y_i/y_j}
    = b_i.
    \]
Therefore Claim 2 holds by induction.

Finally, observe that $C_s = \PP$ and $G = H(C_s)$, so that $\B = B^G = (B \cap C_s)^{H(C_s)}$. Therefore, by Claim 2 and recalling that $y_s = k$, we obtain $|\B| = b_s = \prod_{j=1}^s \binom{e_j}{y_j/y_{j-1}}^{k/y_j}$. This completes the proof of the lemma.
\end{proof}

\begin{lemma} \label{lem:flagtr}
Let $s,e_1,\dots,e_s$ be integers, all at least $2$, let $\PP = \prod_{i=1}^s \mathbb{Z}_{e_i}$, and let $\Ch$ be a chain of nontrivial partitions of $\PP$, as in \eqref{eq:chain} with chain stabiliser $G = S_{e_1} \wr \ldots \wr S_{e_s}$. Then for any subset $B \subseteq \PP$ which is uniform relative to $\Ch$, 
the setwise stabiliser $G_{B}$ of $B$ is transitive on $B$.
\end{lemma}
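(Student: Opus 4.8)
The plan is to prove transitivity of $G_B$ on $B$ by exhibiting, for any two points $\alpha, \beta \in B$, an element of $G_B$ mapping $\alpha$ to $\beta$. The natural strategy is a descending induction along the chain $\Ch$, moving from the coarsest nontrivial partition down to singletons, analogous to the inductive argument already used in the proof of Lemma~\ref{lem:unif}(b). Write $\alpha = (\alpha_j)_{j=1}^s$ and $\beta = (\beta_j)_{j=1}^s$, and for each $i$ let $C_i(\alpha)$, $C_i(\beta)$ denote the respective $\C_i$-classes containing $\alpha$ and $\beta$; both of these lie in $\C_i(B)$ since $B$ is uniform and meets them. The goal is to construct $g \in G_B$ with $\alpha^g = \beta$.

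First I would handle the top level. By Lemma~\ref{lem:unif}(b) the $G$-orbit of $B$ is exactly the set of uniform subsets with sequence $(y_0,\dots,y_s)$, and in particular $G$ is transitive on $\C_{s-1}(B)$ via its induced action $G^{\C_{s-1}}\cong S_{e_s}$ (Proposition~\ref{p:iterwr}(c)); more usefully, I want an element of $G_B$ — not just of $G$ — carrying $C_{s-1}(\alpha)$ to $C_{s-1}(\beta)$. The key observation is that $B\cap C$ has the \emph{same} uniform sequence $(y_0,\dots,y_{s-1})$ for every $C\in\C_{s-1}(B)$, so the restrictions of $B$ to distinct top-level classes are all equivalent under the respective $H(C)$-actions by Claim~1 in the proof of Lemma~\ref{lem:b}. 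Hence I can permute the classes of $\C_{s-1}(B)$ by an element of $G^{\C_{s-1}}$ while simultaneously adjusting within each class by suitable $h_C\in H(C)$, so that $B$ is preserved as a whole and $C_{s-1}(\alpha)$ is mapped to $C_{s-1}(\beta)$. This produces $g_1 \in G_B$ with $\alpha^{g_1} \in C_{s-1}(\beta)$.

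Next I would iterate this descent inside a single class. Having arranged (after replacing $\alpha$ by $\alpha^{g_1}$) that $\alpha$ and $\beta$ lie in a common class $C=C_{s-1}(\beta)$, I restrict attention to the action of $H(C)=G_C^C\cong S_{e_1}\wr\dots\wr S_{e_{s-1}}$ on $C$, under which $B\cap C$ is uniform with sequence $(y_0,\dots,y_{s-1})$ relative to $\Ch(C)$. Since $H(C)$ extends to elements of $G$ fixing all other $\C_{s-1}$-classes pointwise, any element of $(H(C))_{B\cap C}$ realising transitivity inside $C$ lifts to an element of $G_B$. The statement for $C$ with its chain $\Ch(C)$ of length $s-1$ is exactly the assertion of the lemma one level down, so the whole argument reduces to an induction on $s$, with base case $s=1$: there $G=S_{e_1}$ acts as the full symmetric group on the $\C_1$-class $\PP$ of size $e_1$, and $G_B$ is the full symmetric group on $B$ (times $\Sym$ of the complement), which is transitive on $B$.

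The main obstacle is bookkeeping rather than conceptual: I must verify that the class-permuting element of $G^{\C_{s-1}}$ and the within-class corrections $h_C\in H(C)$ can be chosen \emph{simultaneously} so that their product stabilises $B$ setwise while achieving the desired effect on the chosen point, using that the semidirect product decomposition of Proposition~\ref{p:iterwr}(c) lets the base group $\prod_{C}H(C)$ act independently on each class and the top group $S_{e_s}$ permute the classes. The crucial input making this clean is the uniformity of $B$ — because $|B\cap C|=y_{s-1}$ for \emph{every} class $C\in\C_{s-1}(B)$ and each $B\cap C$ lies in a single $H(C)$-orbit by Claim~1 of Lemma~\ref{lem:b}, permuting the classes by any permutation in $S_{e_s}$ can always be matched by base-group elements restoring $B$. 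I expect the remaining within-class step to follow immediately from the inductive hypothesis, so once the compatibility of the two factors is spelled out the proof concludes without further computation.
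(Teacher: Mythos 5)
Your argument is correct, but it takes a genuinely different route from the paper. The paper first uses Lemma~\ref{lem:unif}(b) to reduce to the canonical representative $B$ of \eqref{B-unif}, observes that this $B$ is a Cartesian product $E_1\times\cdots\times E_s$ with $E_i=\mathbb{Z}_{y_i/y_{i-1}}$, and then exhibits an explicit transitive subgroup of $G_B$, namely the smaller iterated wreath product $L=\Sym(E_1)\wr\cdots\wr\Sym(E_s)$ embedded in $G$ by letting each $\Sym(E_i)$ fix $\mathbb{Z}_{e_i}\setminus E_i$ pointwise; transitivity of $L$ on the product set is then a standard fact about wreath products of transitive groups, proved by induction on $s$. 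You instead work with an arbitrary uniform $B$ and, for a given pair $\alpha,\beta\in B$, build an element of $G_B$ carrying $\alpha$ to $\beta$ by descending the chain: permute the $\C_{s-1}$-classes via the top factor of the decomposition in Proposition~\ref{p:iterwr}(c), repair $B$ class-by-class using the fact that all restrictions $B\cap C$ for $C\in\C_{s-1}(B)$ lie in a single $H(C)$-orbit (Claim~1 of Lemma~\ref{lem:b}, equivalently Lemma~\ref{lem:unif}(b) applied to $C$), and then invoke the inductive hypothesis inside the class $C_{s-1}(\beta)$, lifting elements of $H(C)_{B\cap C}$ to $G_B$ by letting them fix the other classes pointwise. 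Both proofs are sound and both are ultimately inductions on $s$; the paper's is shorter because the normalisation to product form makes the transitive subgroup visible at once, while yours avoids the normalisation at the cost of the repair step after permuting classes, which is exactly the compatibility point you correctly identify as the only delicate part.
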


\begin{proof}
Let $B \subseteq \PP$ be uniform relative to $\Ch$,  with uniform sequence $(y_0, \ldots, y_s)$. 
Since, by Lemma \ref{lem:unif}(b), every $G$-orbit on uniform subsets of $\PP$ with uniform sequence $(y_0, \ldots, y_s)$ contains a subset $B$ of the form described in \eqref{B-unif}, it is enough to prove the result for such a subset $B$. Observe that the subset $B$ in \eqref{B-unif}  can be written as $B = E_1 \times \ldots \times E_s$ where, for $i \in \{1, \ldots, s\}$,
    \[
    E_i := \left\{ \delta_i \in \mathbb{Z}_{e_i} \ \vline \ 0 \leq \delta_i < \frac{y_i}{y_{i-1}} \right\} = \mathbb{Z}_{y_i/y_{i-1}}.
    \]
For each $i \in \{1, \ldots, s\}$, the sets $B \cap C$, for all $C \in \C_i(B)$, form a partition $\C_i^B$ of $B$, and the partitions $\C_i^B$ form a chain $\Ch^B : \binom{B}{1} = \C_0^B \prec \C_1^B \prec \ldots \prec \C_s^B = \{B\}$. We note that some of the subsets $E_i$ may contain only a single element, (if $y_{i-1}=y_i$), and hence some of these partitions may be equal. This possible degeneracy does not affect our arguments. By Proposition \ref{p:iterwr}(a) the stabiliser in $\Sym(B)$ of this chain $\Ch^B$ is the group $L = \Sym(E_1) \wr \ldots \wr \Sym(E_s)$. Identify each group $\Sym(E_i)$ with the subgroup of $S_{e_i}$ acting naturally on $E_i$ and fixing $\mathbb{Z}_{e_i} \setminus E_i$ pointwise; thus $L \leq G$ and in particular $L \leq G_B$. We shall show that $L$ is transitive on $B$. By a result in permutation group theory (see, for instance, \cite[Lemma 5.4 (iii)]{PS}), the wreath product $X \wr Y$ of groups $X \leq \Sym(\Gamma)$ and $Y \leq \Sym(\Delta)$ is transitive in its natural action on $\Gamma \times \Delta$ if and only if $X$ is transitive on $\Gamma$ and $Y$ is transitive on $\Delta$. (This is true even if one or other of $\Gamma, \Delta$ has size $1$.) If $s=2$ then $L = \Sym(E_1) \wr \Sym(E_2)$, so by \cite[Lemma 5.4 (iii)]{PS} the group $L$ is transitive on $B$. Suppose that $s \geq 3$ and that $K = \Sym(E_1) \wr \ldots \wr \Sym(E_{s-1})$ is transitive on $E_1 \times \ldots \times E_{s-1}$; then $L = K \wr \Sym(E_s)$ and again by \cite[Lemma 5.4 (iii)]{PS} the group $L$ is transitive on $B = (E_1 \times \ldots \times E_{s-1}) \times E_s$. Therefore, by induction, $L$ is transitive on $B$. It follows that $G_B$ is transitive on $B$, completing the proof.
\end{proof}

A $1$-design is a point-block incidence structure in which every point lies in a constant number of blocks. Clearly, any point-block  incidence structure which admits a point-transitive automorphism group is a $1$-design.

\begin{corollary}\label{cor:flagtr}
Let $s,e_1,\dots,e_s$ be integers, all at least $2$, let $\PP = \prod_{i=1}^s \mathbb{Z}_{e_i}$, and let $\Ch$ be a chain of nontrivial partitions of $\PP$, as in \eqref{eq:chain} with chain stabiliser $G = S_{e_1} \wr \ldots \wr S_{e_s}$. Then for any subset $B \subseteq \PP$ which is uniform relative to $\Ch$ with uniform sequence $(y_0, \ldots, y_s)$, the point-block incidence structure $\D = (\PP,B^G)$ is a $G$-flag-transitive $1$-design.
\end{corollary}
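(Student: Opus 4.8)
The plan is to assemble the corollary from facts already established, treating it as essentially a bookkeeping exercise: the $1$-design property will come from point-transitivity, while flag-transitivity will combine block-transitivity (immediate from $\B=B^G$) with the transitivity of the block stabiliser proved in Lemma~\ref{lem:flagtr}.

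First I would record that $G\leq\Aut(\D)$: since $\B=B^G$ is by definition a single $G$-orbit, every $g\in G$ permutes $\B$ and fixes $\PP$ setwise, so $G$ preserves the incidence structure and in particular acts on the set of flags. Next I would observe that $G=S_{e_1}\wr\ldots\wr S_{e_s}$ is transitive on $\PP=\prod_{i=1}^s\mathbb{Z}_{e_i}$; this follows by the same inductive application of \cite[Lemma 5.4 (iii)]{PS} used in the proof of Lemma~\ref{lem:flagtr}, now applied to the full symmetric groups $S_{e_i}$ (each transitive on $\mathbb{Z}_{e_i}$) in place of the subgroups $\Sym(E_i)$. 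By the remark preceding the corollary, any point-block incidence structure admitting a point-transitive automorphism group is a $1$-design, so $\D$ is a $1$-design.

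For flag-transitivity I would invoke the standard criterion that a block-transitive group is flag-transitive precisely when the stabiliser of one block is transitive on the points of that block. Block-transitivity holds because $\B=B^G$ is a single $G$-orbit, and Lemma~\ref{lem:flagtr} supplies exactly the transitivity of $G_B$ on $B$. Explicitly, given flags $(\delta_1,B_1)$ and $(\delta_2,B_2)$, block-transitivity yields $g\in G$ with $B_1^g=B_2$, so $\delta_1^g$ and $\delta_2$ both lie in $B_2$; since $B_2=B^{g'}$ for some $g'$ is again uniform with uniform sequence $(y_0,\ldots,y_s)$ by Lemma~\ref{lem:unif}(b), Lemma~\ref{lem:flagtr} applied to $B_2$ gives $h\in G_{B_2}$ with $(\delta_1^g)^h=\delta_2$, whence $gh$ carries $(\delta_1,B_1)$ to $(\delta_2,B_2)$. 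Thus $G$ is flag-transitive.

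Every substantive ingredient here has been proved earlier, so I expect no genuine obstacle. The only point meriting a little care is that Lemma~\ref{lem:flagtr} is applied not just to $B$ but to an arbitrary block $B_2\in\B$; this is legitimate because Lemma~\ref{lem:unif}(b) guarantees each member of the orbit $B^G$ is itself uniform with the same uniform sequence, so the lemma applies verbatim. Alternatively, one may fix $B_1=B$ throughout and use only the transitivity of $G_B$, which removes the need even for this observation.
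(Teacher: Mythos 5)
Your proposal is correct and follows essentially the same route as the paper: the $1$-design property from point-transitivity of $G$ on $\PP$, and flag-transitivity from block-transitivity of $\B=B^G$ combined with the transitivity of $G_B$ on $B$ supplied by Lemma~\ref{lem:flagtr}. The paper states this in two sentences; your version merely fills in the standard details (including the correct observation that each block in the orbit is again uniform, via Lemma~\ref{lem:unif}(b)).
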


\begin{proof}
The conclusion that $\D$ is a $1$-design follows from the transitivity of $G$ on $\PP$. Flag-transitivity follows from Lemma \ref{lem:flagtr}.
\end{proof}

\section{A general construction for flag-transitive, $s$-chain-imprimitive $2$-designs}\label{sec:gencon}

In this section we give a general construction for $2$-designs based on the existence of a sequence of integers $y_i$ as in \eqref{def:yi} with the properties  (FT1) and (FT2) of Theorem~\ref{thm:ft}. 
We first deduce from these properties a divisibility condition for these numbers $y_i$.

\begin{lemma}\label{lem:arith}
    Let $s,k,e_1,\dots,e_s$ be integers, all at least $2$ such that $k<v$ where $v=\prod_{i=1}^s e_i$. Let $d=\gcd(e_1-1,\dots,e_s-1)$, let $y_0=1$, and for $1\leq i\leq s$ let 
    $$ y_i = 1 + \frac{k-1}{v-1} \left( \left(\prod_{j=1}^i e_j\right) - 1 \right).$$
    Assume that $v-1$ divides $(k-1) \cdot d$.
        \begin{enumerate}
        \item[(a)] The number $u := \frac{(k-1)d}{v-1}\in\mathbb{Z}$ and each $y_i$ is a positive integer coprime to $u$. Moreover $y_1-y_0=\frac{k-1}{v-1}(e_1-1)$, and for $2\leq i\leq s$, $y_i-y_{i-1} = \frac{k-1}{v-1}(e_i-1)\prod_{j=1}^{i-1} e_j$. 
        \item[(b)] If, in addition, for each $i \in \{1, \ldots, s\}$ the integer $y_{i-1}$ divides $(e_i-1)\left(\prod_{j=1}^{i-1} e_j\right)/d$, then for each $i\in\{1,\dots,s\}$, $y_{i-1}$ divides $y_i$ and $1 < \frac{y_i}{y_{i-1}} < e_i$.
        \end{enumerate}
\end{lemma}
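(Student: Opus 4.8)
The plan is to reduce both parts to a single arithmetic observation: writing $P_i := \prod_{j=1}^i e_j$ (with $P_0 := 1$), the number $d$ divides $P_i - 1$. Indeed, since $d \mid (e_j - 1)$ for every $j$, each $e_j \equiv 1 \pmod d$, and hence $P_i \equiv 1 \pmod d$, i.e. $d \mid (P_i - 1)$. This is the key to the integrality and coprimality claims in part (a).

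For part (a), the integrality of $u := \frac{(k-1)d}{v-1}$ is exactly the standing hypothesis that $v-1$ divides $(k-1)d$. Setting $m_i := (P_i - 1)/d \in \mathbb{Z}$, I would then rewrite the definition of $y_i$ as
$$y_i = 1 + \frac{(k-1)d}{v-1}\cdot\frac{P_i - 1}{d} = 1 + u\,m_i.$$
This single display settles three things at once: it shows $y_i \in \mathbb{Z}$ (as $u, m_i \in \mathbb{Z}$); since $P_i - 1$ is a positive multiple of $d$ we have $m_i \geq 1$ for $i \geq 1$, so $y_i \geq 2$ (and $y_0 = 1$), giving that each $y_i$ is a positive integer; and since $y_i \equiv 1 \pmod u$, any common divisor of $y_i$ and $u$ divides $1$, so $\gcd(y_i, u) = 1$. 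The difference formulas are then a direct computation: for $2 \leq i \leq s$, using $P_i = e_i P_{i-1}$,
$$y_i - y_{i-1} = \frac{k-1}{v-1}\,(P_i - P_{i-1}) = \frac{k-1}{v-1}\,(e_i - 1)\,P_{i-1},$$
and the case $i = 1$ is identical with $P_0 = 1$.

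For part (b), I would first rewrite the difference from (a) as $y_i - y_{i-1} = u\,N_i$, where $N_i := (e_i - 1)P_{i-1}/d$; this $N_i$ is an integer precisely because $d \mid (e_i - 1)$. The hypothesis of (b) is exactly that $y_{i-1} \mid N_i$, whence $y_{i-1} \mid u N_i = y_i - y_{i-1}$, and therefore $y_{i-1} \mid y_i$. The lower bound $y_i/y_{i-1} > 1$ is immediate, since the difference $y_i - y_{i-1} = \frac{k-1}{v-1}(e_i - 1)P_{i-1}$ is strictly positive. For the upper bound I would compute $e_i y_{i-1} - y_i$ directly: writing $c := \frac{k-1}{v-1}$ and again using $e_i P_{i-1} = P_i$, the two products cancel and everything collapses to
$$e_i y_{i-1} - y_i = (e_i - 1)(1 - c),$$
which is strictly positive because $e_i \geq 2$ and $c < 1$ (as $k < v$); hence $y_i < e_i y_{i-1}$, i.e. $y_i/y_{i-1} < e_i$.

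The only genuinely delicate point is the congruence $d \mid (P_i - 1)$ underpinning part (a): it is what simultaneously guarantees that each $y_i$ is an integer and that it is coprime to $u$, and it is the natural generalisation of the coprimality step used in the $s = 2$ analysis. Everything else is bookkeeping with the telescoping identity $e_i P_{i-1} = P_i$; in particular, it is worth noting that the upper bound $y_i/y_{i-1} < e_i$ uses only $k < v$ and not the extra divisibility hypothesis of part (b).
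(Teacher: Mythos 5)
Your proof is correct and follows essentially the same route as the paper's: write $y_i = 1 + u\,(P_i-1)/d$ to get integrality, positivity and coprimality to $u$ at once, deduce $y_{i-1}\mid y_i$ from the divisibility hypothesis applied to the difference $y_i - y_{i-1} = u\,(e_i-1)P_{i-1}/d$, and obtain the upper bound from $e_i y_{i-1} - y_i = (e_i-1)(1-\tfrac{k-1}{v-1}) > 0$. The only cosmetic difference is that you justify $d \mid (P_i - 1)$ by the congruence $e_j \equiv 1 \pmod d$ rather than the paper's telescoping identity, which is an equally valid (arguably cleaner) way to see the same fact.
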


\begin{proof}
For $0\leq i\leq s$, let $c_i=\prod_{j=0}^ie_j$, with $e_0=1$. Then for each $i=1,\dots, s$,
\[
c_i-1 = c_{i-1}(e_i-1) +\dots +  c_{1}(e_{2}-1) + (e_1-1),
\]
and it follows that $d=\gcd(e_1-1,\dots,e_s-1)$ divides $c_i-1$; this holds also for $i=0$ since $c_0=1$.
By assumption $v-1$ divides $(k-1) \cdot d$, so  $u := \frac{(k-1)d}{v-1}\in\mathbb{Z}$. Thus we can write $y_i$ as
    \begin{equation}\label{y_i-u2}
    y_i = 1 + \frac{u}{d} \cdot (c_i - 1) \quad \text{for $0 \leq i \leq s$},
 \end{equation}
 proving that $y_i\in\mathbb{Z}^+$ and $y_i$ is coprime to $u$.   
Then, for $1\leq i\leq s$, using this equation, and noting that $c_i = c_{i-1}e_i$, we have
    \[ 
    y_i - y_{i-1} = \frac{u}{d} \cdot (c_i - c_{i-1}) = \frac{u}{d} \cdot (e_i - 1)c_{i-1}=
        \frac{k-1}{v-1}(e_i-1)\prod_{j=1}^{i-1} e_j. 
    \]
This proves part (a), noting that the product is `empty' if $i=1$. 

Now assume, in addition, that for each $i \in \{1, \ldots, s\}$ the integer 
$y_{i-1}$ divides $(e_i - 1)\left(\prod_{j=1}^{i-1} e_j\right)/d=\frac{(e_i - 1)c_{i-1}}{d}$. Thus, since $u \in \mathbb{Z}$, it follows that $y_{i-1}$ divides $y_i - y_{i-1}$, and therefore $y_{i-1}$ divides $y_i$.
Since $e_i\geq 2$ for all $i$, the $c_i$'s are strictly increasing. Then by \eqref{y_i-u2}, the $y_i$'s are also strictly increasing, and so $\frac{y_i}{y_{i-1}}>1$. 
Since $y_{i-1} = 1 + u(c_{i-1}-1)/d$,  by \eqref{y_i-u2}, we get that
    \[
    y_{i-1} e_i
    = e_i + u \cdot \frac{(c_{i-1} - 1)e_i}{d}
    = e_i \left( 1 - \frac{u}{d} \right) + u \cdot \frac{c_{i-1} e_i}{d}
    = e_i \left( 1 - \frac{u}{d} \right) + u \cdot \frac{c_{i}}{d}.
    \]
Now $k < v$ implies that $\frac{u}{d} = \frac{k-1}{v-1} < 1$, so $1 - \frac{u}{d} > 0$. Then, since $e_i \geq 2$, 
    \[
    e_i \left( 1 - \frac{u}{d} \right) + u \cdot \frac{c_{i}}{d}
    > \left( 1 - \frac{u}{d} \right) + u \cdot \frac{c_{i}}{d} = 1 + u \cdot \frac{c_i - 1}{d} = y_i.
    \]
Therefore  $\frac{y_i}{y_{i-1}} < e_i$. This proves part (b). 
\end{proof}

Now we give the general design construction.

\begin{construction} \label{constr:gen}
Let $s \geq 2$ and $e_1, \ldots, e_s \geq 2$ be integers, $v := \prod_{i=1}^s e_i$, $d := \gcd(e_1-1, \ldots, e_s-1)$, and let $k$ be an integer with $1 < k < v$. Let $y_0:=1$ and for $i \in \{1, \ldots, s\}$ let $y_i$ be as in \eqref{def:yi}, that is,
    \[ 
    y_i := 1 + \frac{k-1}{v-1} \left( \left( \prod_{1\leq j \leq i} e_j \right) - 1 \right) \quad \text{for $i \in \{1, \ldots, s\}$}.
    \]
Assume that properties (FT1) and (FT2) in Theorem \ref{thm:ft} both hold, that is, $v-1$ divides $(k-1) \cdot d$, and for each $i \in \{1, \ldots, s-1\}$, the integer $y_{i}$ divides $(e_{i+1} - 1) \left(\prod_{j=1}^{i} e_j\right)/d$. Let $\PP = \prod_{i=1}^s \mathbb{Z}_{e_i}$, and let $G = S_{e_1} \wr \ldots \wr S_{e_s}$, the stabiliser of the partition chain $\Ch$ as in \eqref{eq:chain} and \eqref{partn}. Also let $\mathbf{e} := (e_1, \ldots, e_s)$ and let  $\Dft(\mathbf{e};k)=(\PP,B^G)$, where 
$B$ is as in \eqref{B-unif}, that is, 
    \[
    B = \bigg\{ (\delta_1, \ldots, \delta_s) \ \bigg| \ 0 \leq \delta_i \leq \frac{y_i}{y_{i-1}} - 1 \ \text{for $1 \leq i \leq s$} \bigg\}.
    \]
\end{construction}

\begin{proposition} \label{prop:gen-ft}
With the hypotheses of Construction $\ref{constr:gen}$, 
\begin{enumerate}
    \item[(a)] the set $B$ has cardinality $k$, and
    \item[(b)] $\Dft(\mathbf{e};k)$ is a $G$-flag-transitive, $(G,s)$-chain-imprimitive $2$-$(v,k,\lambda)$ design, with $\lambda = \frac{bk(k-1)}{v(v-1)}$ where $b$ is as given in \eqref{eq:b} in Lemma \emph{\ref{lem:b}}.
\end{enumerate}  
\end{proposition}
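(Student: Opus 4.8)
The plan is to assemble the conclusion from the machinery already in place, the only computational content being the verification of the $2$-design conditions of Theorem~\ref{mainthm:2des}. First I would observe that the standing hypotheses (FT1) and (FT2) of Construction~\ref{constr:gen} are precisely the hypotheses of Lemma~\ref{lem:arith}: condition (FT1) is the divisibility assumption of part~(a), while (FT2) supplies the divisibility assumption of part~(b) for the indices $i\in\{2,\dots,s\}$ (via the reindexing $i\mapsto i-1$), the remaining instance $i=1$ holding trivially since $y_0=1$. Lemma~\ref{lem:arith} then yields, for each $i\in\{1,\dots,s\}$, that $y_{i-1}\mid y_i$ with $1<y_i/y_{i-1}<e_i$, together with the explicit differences $y_1-y_0=\tfrac{k-1}{v-1}(e_1-1)$ and $y_i-y_{i-1}=\tfrac{k-1}{v-1}(e_i-1)\prod_{j=1}^{i-1}e_j$ for $i\ge 2$. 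In particular each ratio $y_i/y_{i-1}$ is an integer at most $e_i$, so the coordinate constraint $0\le \delta_i\le y_i/y_{i-1}-1$ defines a genuine subset of $\mathbb{Z}_{e_i}$ and $B$ is well defined.

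For part~(a) I would simply compute $|B|=\prod_{i=1}^s (y_i/y_{i-1})$, which telescopes to $y_s/y_0=y_s$; and since $\prod_{j=1}^s e_j=v$, the definition of $y_s$ gives $y_s=1+\tfrac{k-1}{v-1}(v-1)=k$, so $|B|=k$. Because conditions (i) and (ii) of Lemma~\ref{lem:unif}(a) now hold, Lemma~\ref{lem:unif}(b) identifies $B$ as a uniform subset relative to $\Ch$ with uniform sequence $(y_0,\dots,y_s)$. At this point the two ``soft'' assertions of part~(b) are immediate: the group $G=S_{e_1}\wr\dots\wr S_{e_s}$ is transitive on $\PP$ and by construction preserves every partition in $\Ch$, so it acts $s$-chain-imprimitively; and Corollary~\ref{cor:flagtr}, applied to the uniform subset $B$, shows that $\Dft(\mathbf{e};k)=(\PP,B^G)$ is a $G$-flag-transitive $1$-design.

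The substantive step is to show $\Dft(\mathbf{e};k)$ is a $2$-design by verifying \eqref{2des-1} and \eqref{2des}. Here I would exploit two bookkeeping facts coming from uniformity of $B$: for each $i$, every class in $\C_i(B)$ meets $B$ in exactly $y_i$ points (so $|\C_i(B)|=k/y_i$), and $x_C\in\{0,y_i\}$ for $C\in\C_i$. For \eqref{2des-1}, only the $k/y_1$ classes meeting $B$ contribute, each giving $y_1(y_1-1)$, so the left side equals $k(y_1-1)=\tfrac{k(k-1)}{v-1}(e_1-1)$ by the value of $y_1-y_0$. For \eqref{2des} with $i\in\{2,\dots,s\}$, a class $C\in\C_{i-1}$ contributes only when it meets $B$, in which case $x_C=y_{i-1}$ and its $\C_i$-parent $C^+$ also meets $B$, so $x_{C^+}=y_i$; as there are $k/y_{i-1}$ such $C$, the left side is $k(y_i-y_{i-1})=\tfrac{k(k-1)}{v-1}(e_i-1)\prod_{j\le i-1}e_j$, again by Lemma~\ref{lem:arith}(a). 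These match the right-hand sides exactly, so Theorem~\ref{mainthm:2des} gives that $\Dft(\mathbf{e};k)$ is a $2$-$(v,k,\lambda)$ design. Finally, to pin down $\lambda$ I would double count incident triples (ordered pair of distinct points, block containing both), namely $v(v-1)\lambda=|\B|\,k(k-1)$, and substitute $|\B|=b$ from Lemma~\ref{lem:b}, giving $\lambda=\tfrac{bk(k-1)}{v(v-1)}$.

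The argument is essentially a matter of invoking the prior results in the right order, so there is no deep obstacle; the one place requiring care — where I would be most wary of an off-by-one slip — is matching the index conventions between (FT2) and the hypothesis of Lemma~\ref{lem:arith}(b), and correctly reading off from uniformity that the parent class $C^+$ of a $B$-meeting class again meets $B$ (so that $x_{C^+}=y_i$ rather than $0$) when evaluating the sum in \eqref{2des}.
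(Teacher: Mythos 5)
Your proposal is correct and follows essentially the same route as the paper's proof: invoke Lemma~\ref{lem:arith} to get the divisibility and bounds on the ratios $y_i/y_{i-1}$, apply Lemma~\ref{lem:unif} and Corollary~\ref{cor:flagtr} to obtain uniformity and flag-transitivity, and then verify \eqref{2des-1} and \eqref{2des} by the same computation with $x_C\in\{0,y_i\}$ and $|\C_i(B)|=k/y_i$. Your explicit attention to the index shift between (FT2) and the hypothesis of Lemma~\ref{lem:arith}(b), and the remark that the $i=1$ instance is trivial because $y_0=1$, is a detail the paper leaves implicit but is handled correctly.
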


\begin{proof}
By Lemma \ref{lem:arith}, for each $i \in \{1, \ldots, s\}$, $y_i$ is an integer, $y_{i-1}$ divides $y_i$, and $1 < \frac{y_i}{y_{i-1}} \leq e_i$.
Therefore the set $B$  is uniform relative to $\C$ with uniform sequence $(y_0,\ldots, y_s)$ by Lemma \ref{lem:unif}. In particular  $|B|=y_s=k$.

By Corollary~\ref{cor:flagtr}, $\D = \Dft(\mathbf{e};k)$ is a $G$-flag-transitive $1$-design, and by construction $\D$ is $(G,s)$-chain-imprimitive.  
It remains to prove that $\D$ is a $2$-design, and we do this by  verifying conditions \eqref{2des-1} and \eqref{2des} of Theorem~\ref{mainthm:2des} for $i \in \{2, \ldots, s\}$. 
%
%
Since $B$ has  uniform sequence $(y_0,\ldots, y_s)$, we have that $x_C = y_i$ for $C \in \C_i(B)$, and $x_C = 0$ for $C \in \C_i\setminus \C_i(B)$. The set $B$ is the disjoint union of intersections $B \cap C$ for all $C \in \C_i(B)$, so that $\sum_{C \in \C_i(B)} y_i = \sum_{C \in \C_i(B)} |B \cap C| = |B| = k$. In particular, for $i=1$,
    \[ 
    \sum_{C \in \C_1} x_C (x_C - 1) = \sum_{C \in \C_1(B)} y_1 (y_1 - 1) = \left(\sum_{C \in \C_1(B)} y_1 \right) (y_1 - 1) = k(y_1 - 1). 
    \]
Condition \eqref{2des-1} follows since, by the definition of $y_1$, we have $y_1 - 1 = \frac{k-1}{v-1}(e_1 - 1)$. Let $i \in \{2, \ldots, s\}$ and let $C^+$ be the unique $\C_i$-class containing $C \in \C_{i-1}$. As noted above, for $C \in \C_{i-1}(B)$ we have $x_C=y_{i-1}$ and $x_{C^+} = y_i$, and for $C \not\in \C_{i-1}(B)$ we have $x_C=0$. Thus
    \[ \sum_{C \in \C_{i-1}} x_{C} \left( x_{C^+} - x_{C} \right)
    = \sum_{C \in \C_{i-1}(B)} y_{i-1} \left( y_i - y_{i-1} \right)
    = \left(\sum_{C \in \C_{i-1}(B)} y_{i-1}\right)\cdot \left( y_i - y_{i-1} \right)
    = k(y_i - y_{i-1}). \]
    By Lemma~\ref{lem:arith}(a), $y_i - y_{i-1}= \frac{k-1}{v-1} (e_i - 1) \prod_{j=1}^{i-1} e_j$, and condition \eqref{2des} follows. Therefore, by Theorem \ref{mainthm:2des}, $\D = (\PP,\B)$ is a  $2$-design. The formula for $\lambda$ follows immediately since we determined $b=|B^G|$ in Lemma \emph{\ref{lem:b}}, completing the proof.
\end{proof}

\section{Proof of Theorem \ref{thm:ft} \label{sec:flagtr}}

Let $s, k, e_1, \ldots, e_s$ be integers, all at least $2$, such that $k < v$, where $v := \prod_{i=1}^s e_i$.
Let $\PP = \prod_{i=1}^s \mathbb{Z}_{e_i}$, of size $v$, and let $W = S_{e_1} \wr \ldots \wr S_{e_s}$, the stabiliser of the partition chain $\Ch$ as in \eqref{eq:chain} and \eqref{partn}. So for $1 \leq i \leq s$ each class of $\C_i$ contains $e_i$ classes of $\C_{i-1}$. Let $d := \gcd(e_1 - 1, \ldots, e_s - 1)$.

Suppose first that $y_1,\dots,y_s$ are as in \eqref{def:yi} and that conditions \eqref{2des-1} and \eqref{2des} of Theorem~\ref{mainthm:2des} hold. Then by Proposition~\ref{prop:gen-ft}, the design $\Dft(\mathbf{e};k)$ of Construction~\ref{constr:gen} is a $(G,s)$-chain-imprimitive, $G$-flag-transitive $2$-design, with blocks of size $k$, for the group $G=W$.

Now suppose conversely that $\D=(\PP,\B)$ is a $(G,s)$-chain-imprimitive, $G$-flag-transitive $2$-design, with blocks of size $k$, for some group $G\leq W$ (so $G$ leaves invariant the chain $\Ch$). As discussed after the statement of Theorem~\ref{thm:ft}, $(\PP, B^W)$ is also a $2$-design, for $B\in\B$, so in order to verify that conditions (FT1) and (FT2)  hold for the constants $y_i$ as in \eqref{def:yi}, we may assume that $G=W$. Since $\D$ is $G$-flag-transitive,  the setwise stabiliser $G_{B}$ of the block $B$ is transitive on $B$. Thus for each $i \in \{0, \ldots, s\}$, since $G_B$ leaves $\C_i$ invariant,  every $\C_i$-class which contains elements of $B$ must contain a constant number of elements of $B$.
That is, $B$ is uniform relative to $\Ch$ for some uniform sequence  $(y'_0, \ldots, y'_s)$.

\medskip\noindent\emph{Claim 1:  $(y'_0, \ldots, y'_s) = (y_0, \ldots, y_s)$. }

By \eqref{def:yi} and Lemma~\ref{lem:unif}(a), we have $y'_0 = 1=y_0$ and $y'_s = k=y_s$. So assume that $1\leq i<s$. Since $B$ is the disjoint union of all the intersections $B \cap C$ with $C \in \C_i(B)$, we have $\sum_{C \in \C_i(B)} y'_i = \sum_{C \in \C_i(B)} |B \cap C| = |B| = k$. Also, for any $C \in \C_i$, we have $x_C = y'_i$ if $C \in \C_i(B)$, and $x_C = 0$ otherwise; and since $\D$ is a $2$-design, it follows from Theorem~\ref{mainthm:2des} that conditions \eqref{2des-1} and \eqref{2des} in that theorem hold. We will show by induction that $y'_i=y_i$ for each $i\in\{1,\dots,s\}$.

By \eqref{2des-1} we have $\sum_{C \in \C_1} x_C (x_C - 1) = \frac{k(k-1)}{v-1} (e_1 - 1)$, while 
    \[ \sum_{C \in \C_1} x_C (x_C - 1) = \sum_{C \in \C_1(B)} y'_1 (y'_1 - 1) = k(y'_1 - 1). \]
Hence, $y'_1 = 1 + \frac{k-1}{v-1} (e_1 - 1)$ which equals $y_1$ by \eqref{def:yi}. Suppose now that $2\leq i\leq s$ and $y'_{i-1}=y_{i-1}$.  Note that $C \in \C_{i-1}(B)$ implies that $C^+ \in \C_i(B)$, where $C^+$ is the unique $\C_i$ class containing $C$. Thus, for any $C \in \C_{i-1}$, $x_C = y'_{i-1}$ and $x_{C^+} = y'_i$ whenever $C \in \C_{i-1}(B)$, and $x_C = 0$ otherwise, so
    \[ \sum_{C \in \C_{i-1}} x_C (x_{C^+} - x_C) = \sum_{C \in \C_{i-1}(B)} y'_{i-1} (y'_i - y'_{i-1}) = \left(\sum_{C \in \C_{i-1}(B)} y'_{i-1}\right) (y'_i - y'_{i-1})  = k(y'_i - y'_{i-1}). \]
On the other hand, by \eqref{2des}, $ \sum_{C \in \C_{i-1}} x_C (x_{C^+} - x_C) =\frac{k(k-1)}{v-1} (e_i - 1)\prod_{j=1}^{i-1} e_j$. Hence 
    \begin{equation} \label{y'_i}
    y'_i = y'_{i-1} + \frac{k-1}{v-1} (e_i - 1) \prod_{j \leq i-1} e_j.
    \end{equation}
By the induction hypothesis, we have
    \begin{align*}
    y'_i
    &= y_{i-1} + \frac{k-1}{v-1} (e_i - 1) \prod_{j \leq i-1} e_j \\
    &= 1 + \frac{k-1}{v-1} \left( \Bigg(\prod_{j \leq i-1} e_j\Bigg) - 1  +  (e_i - 1) \prod_{j \leq i-1} e_j\right) \\
    &= 1 + \frac{k-1}{v-1} \left( \Bigg(\prod_{j \leq i} e_j \Bigg) - 1 \right) = y_i. 
    \end{align*}
Thus Claim 1 is proved.

\medskip\noindent\emph{Claim 2: Condition {\rm (FT1)} of Theorem~{\rm\ref{thm:ft}} holds.}\quad
Observe that in equation \eqref{y'_i} (which holds for all $i\in\{1,\dots,s\}$) the numbers $y'_i, y'_{i-1} \in \mathbb{Z}$, so also $\frac{k-1}{v-1} (e_i - 1) \prod_{j \leq i-1} e_j \in \mathbb{Z}$. Now $v = \prod_{j=1}^s e_j$ so $\gcd\big( v-1, \ \prod_{j \leq i-1} e_j \big) = 1$. Hence $v-1$ divides $(k-1)(e_i - 1)$, and this is true for all $i \in \{1, \ldots, s\}$. Therefore $v-1$ divides $(k-1) \cdot d$ where $d =\gcd(e_1 - 1, \ldots, e_s - 1)$, proving (FT1). 

\medskip\noindent\emph{Claim 3: Condition {\rm (FT2)} of Theorem~{\rm\ref{thm:ft}} holds.}\quad 
Let $i\in\{1,\dots, s-1\}$,  and let $u := \frac{(k-1)d}{v-1}$, so $u\in\mathbb{Z}^+$ by Claim 2.
By Claims 1 and 2, the hypotheses of Lemma~\ref{lem:arith}(a) hold, and this result implies that all the $y_j$ are positive integers coprime to $u$, and that 
    \[ 
    y_{i+1} - y_i =  u \cdot \frac{e_{i+1} - 1}{d}\cdot \prod_{j \leq i} e_j . 
    \]
Since $B$ has uniform sequence  $(y_0, \ldots, y_s)$ relative to $\Ch$ by Claim 1,
the parameter $y_i$ divides $y_{i+1}$ by Lemma \ref{lem:unif}(a), and hence $y_i$ divides $y_{i+1} - y_i$. So $y_i$ divides $u \cdot \frac{e_{i+1} - 1}{d}\cdot \prod_{j \leq i} e_j$, and since $y_i$ is coprime to $u$,   $y_i$ divides $\frac{e_{i+1} - 1}{d}\cdot \prod_{j \leq i} e_j$. This proves condition (FT2), so Claim 3 is proved.


\medskip
This proves the main assertion of Theorem~\ref{thm:ft}, and the final assertion follows from Claim 1.
Thus Theorem~\ref{thm:ft} is proved.

\begin{remark}\label{rem:unique}
The discussion above allows us to conclude that, for given $s, e_1, \dots, e_s, v, k$, such that conditions (FT1) and (FT2) of Theorem~\ref{thm:ft} hold for the constants $y_i$ in \eqref{def:yi}, there is a \emph{unique}  $G$-flag-transitive $(G,s)$-chain-imprimitive $2$-design with block size $k$, for $G$ the full stabiliser of the $s$-chain $\Ch$ in \eqref{eq:chain}, namely the design $\Dft(\mathbf{e}; k)$ in Construction~\ref{constr:gen}. To see this note that, if $\D=(\PP,B^G)$ is such a design, then Claim 1 above shows that $B$ is uniform relative to $\Ch$ with uniform sequence $(y_0, \ldots, y_s)$, where each $y_i$ is as defined in \eqref{def:yi}. It follows from Lemma \ref{lem:unif}(b) that $\B$ consists of all subsets of $\PP$ which are uniform relative to $\Ch$ with uniform sequence $(y_0, \ldots, y_s)$. In particular, $\D = \Dft(\mathbf{e};y_s) = \Dft(\mathbf{e};k)$ is the unique such $2$-design.
\end{remark}

\section{Explicit constructions for flag-transitive, $s$-chain-imprimitive $2$-designs}\label{sec:expl}

Note that, although Construction \ref{constr:gen} gives a general construction for flag-transitive, $s$-chain-imprimitive $2$-designs based on a collection of integers $y_i$ satisfying certain conditions, we need to produce integers $y_i$ with the required conditions before we can guarantee existence of such $2$-designs. We do this in Construction \ref{constr:family}, where we give explicit parameters $e_1, \ldots, e_s$, and $k$ such that the integers $y_i$ satisfy the hypotheses of Construction \ref{constr:gen}. 

\begin{construction} \label{constr:family} 
Let $s, d$ be integers, both at least $2$, and define integers $e_1, \ldots, e_s, k$ by
    \begin{equation} \label{e_i}
    e_1=d+1,\quad     e_i = d + \prod_{j \leq i-1} e_j, \quad \text{for $2\leq i\leq s$, and}\quad  k:= 1 + \frac{v-1}{d} \text{ where }v=\prod_{j \leq s} e_j.
    \end{equation}
Let $\PP:=\prod_{i=1}^s\mathbb{Z}_{e_i}$, and let $G = S_{e_1} \wr \ldots \wr S_{e_s}$, the stabiliser of the partition chain $\Ch$ as in \eqref{eq:chain} with partitions as in \eqref{partn}. For $i\in\{1,\dots,s\}$, let $y_i$ be as in \eqref{def:yi}, let $B$ be as in \eqref{B-unif}, and let $\Dft(\mathbf{e};k)$ be as in Construction~\ref{constr:gen}, where $\mathbf{e} = (e_1, \ldots, e_s)$.
\end{construction}

\begin{proposition} \label{prop:ft}
The incidence structure $\Dft(\mathbf{e};k)$ of Construction \emph{\ref{constr:family}} is a $G$-flag-transitive, $(G,s)$-chain-imprimitive $2$-$(v,k,\lambda)$ design with $G = S_{e_1} \wr \ldots \wr S_{e_s}$ and $\lambda = \frac{bk}{vd}$, where $b = \prod_{i=1}^s \binom{e_i} {y_{i}/y_{i-1}}^{k/y_{i}}$ with $y_i=\frac{e_{i+1}-1}{d}$ for all $i \in \{1, \ldots, s-1\}$, and $y_s=k$.
\end{proposition}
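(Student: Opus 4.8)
The plan is to verify that the specific parameters defined in Construction~\ref{constr:family} satisfy the hypotheses of Construction~\ref{constr:gen}, namely conditions (FT1) and (FT2) of Theorem~\ref{thm:ft}, so that Proposition~\ref{prop:gen-ft} applies directly and yields the desired flag-transitive, chain-imprimitive $2$-design. Once (FT1) and (FT2) are confirmed, the flag-transitivity, the chain-imprimitivity, and the fact that $\Dft(\mathbf{e};k)$ is a $2$-$(v,k,\lambda)$ design all follow immediately from Proposition~\ref{prop:gen-ft}, and the value $\lambda = \frac{bk(k-1)}{v(v-1)}$ is given there. It then remains only to simplify this expression for $\lambda$ and to identify the intermediate values $y_i$ for the particular parameters of Construction~\ref{constr:family}.

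First I would establish a clean formula for the partial products. Writing $c_i := \prod_{j\le i} e_j$ (with $c_0 = 1$), the recurrence $e_i = d + c_{i-1}$ gives $c_i = c_{i-1}e_i = c_{i-1}(d + c_{i-1})$, and more usefully $c_i - 1 = c_{i-1}(d+c_{i-1}) - 1$. I expect the key arithmetic identity to be that $d \mid c_i - 1$ for every $i$: this holds for $i=0$ since $c_0 - 1 = 0$, and inductively $c_i - 1 = c_{i-1}e_i - 1 = c_{i-1}(e_i - 1) + (c_{i-1} - 1)$, where $e_i - 1 = d + c_{i-1} - 1$, so both summands are divisible by $d$ using the inductive hypothesis. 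Since $v - 1 = c_s - 1$, this shows $d \mid v-1$, and because $k = 1 + \frac{v-1}{d}$ we get $k - 1 = \frac{v-1}{d}$, whence $(v-1) \mid (k-1)d$ trivially (indeed with equality), giving condition (FT1). Note also that this forces $u := \frac{(k-1)d}{v-1} = 1$, which will make the $y_i$ formulas particularly transparent.

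Next I would compute the $y_i$ explicitly. From \eqref{def:yi}, $y_i = 1 + \frac{k-1}{v-1}(c_i - 1) = 1 + \frac{1}{d}(c_i - 1)$, using $\frac{k-1}{v-1} = \frac{1}{d}$. Since $e_{i+1} = d + c_i$ for $1 \le i \le s-1$, we have $c_i - 1 = e_{i+1} - d - 1$, so $\frac{c_i - 1}{d} = \frac{e_{i+1} - 1}{d} - 1$, giving $y_i = \frac{e_{i+1}-1}{d}$ for $1 \le i \le s-1$, exactly as claimed in the statement; and $y_s = k$. For condition (FT2) I must check that $y_i$ divides $(e_{i+1}-1)c_i/d$ for each $i \in \{1,\dots,s-1\}$. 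Since $y_i = \frac{e_{i+1}-1}{d}$ and $\frac{(e_{i+1}-1)c_i}{d} = y_i \cdot c_i$, the quotient is simply $c_i \in \mathbb{Z}$, so (FT2) holds cleanly. With (FT1) and (FT2) verified, Proposition~\ref{prop:gen-ft} gives the design and $\lambda = \frac{bk(k-1)}{v(v-1)}$; substituting $\frac{k-1}{v-1} = \frac{1}{d}$ yields $\lambda = \frac{bk}{vd}$, and Lemma~\ref{lem:b} supplies the stated product formula for $b$.

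I do not anticipate a genuine obstacle here, since the parameters were reverse-engineered precisely to make $u = 1$ and the divisibility conditions hold with integer quotients. The one point requiring care is confirming $1 < \frac{y_i}{y_{i-1}} < e_i$ (equivalently that $B$ is a legitimate uniform subset with the $y_i$ strictly increasing but not exceeding the class-size bounds); this is covered by Lemma~\ref{lem:arith}(b) once (FT1) and (FT2) are in place, but I would remark that $k < v$ holds since $k = 1 + \frac{v-1}{d} < v$ whenever $d \ge 2$, which is exactly the hypothesis $d \ge 2$ of Construction~\ref{constr:family}, thereby justifying the application of the earlier lemmas. The only mild subtlety is bookkeeping with the boundary index $i = s$ versus $i \le s-1$ in the two cases of the formula for $y_i$, which I would state explicitly to avoid any off-by-one confusion.
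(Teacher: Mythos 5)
Your proposal is correct and follows essentially the same route as the paper: verify (FT1) via $v-1=(k-1)d$, compute $y_i=\frac{e_{i+1}-1}{d}$ from $\frac{k-1}{v-1}=\frac{1}{d}$ and $\prod_{j\le i}e_j=e_{i+1}-d$, observe that $(e_{i+1}-1)\bigl(\prod_{j\le i}e_j\bigr)/d = y_i\cdot\prod_{j\le i}e_j$ to get (FT2), and then invoke Proposition~\ref{prop:gen-ft} and Lemma~\ref{lem:b}. The one step the paper makes explicit that you leave implicit is that the input parameter $d$ of Construction~\ref{constr:family} actually equals $\gcd(e_1-1,\ldots,e_s-1)$ (needed because (FT1) and (FT2) are phrased in terms of that gcd); this is immediate from your induction showing $d\mid e_i-1$ for all $i$ together with $e_1-1=d$, but it should be stated.
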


\begin{proof}
First we prove that $d$ is equal to $d':=\gcd(e_1-1,\dots,e_s-1)$.  We show inductively that $d$ divides $e_i-1$ for each $i$: by \eqref{e_i} we have $d=e_1-1$ so this holds for $i=1$, and if $1<i\leq s$ and $d$ divides $e_j-1$ for all $j<i$, then by \eqref{e_i}, $e_i\equiv d+1\equiv 1\pmod{d}$, so $d$ also divides $e_i-1$. Thus $d$ divides $d'$. On the other hand $d'\leq e_1-1=d$ and hence $d=d'$. 

Thus to prove that $\Dft(\mathbf{e};k)$ is a $2$-design, it is sufficient to verify that properties (FT1) and (FT2) in Theorem \ref{thm:ft} hold, since then all the assertions will follow from Proposition~\ref{prop:gen-ft}. First we note that, by \eqref{e_i}, $v-1=(k-1)\cdot d$, and hence $v-1$ divides $(k-1)\cdot d$, proving property (FT1).  The second property (FT2) involves the $y_i$ for $1\leq i<s$.  Since $\frac{k-1}{v-1} = \frac{1}{d}$ we have, for each $i \in \{1, \ldots, s-1\}$, that $\prod_{j \leq i} e_j = e_{i+1} - d$, so by \eqref{def:yi},
    \begin{align*}
    y_i
    = 1 + \frac{k-1}{v-1} \left( \left( \prod_{j \leq i} e_j \right) - 1 \right)
    = 1 + \frac{1}{d} \left( \left( e_{i+1} - d \right) - 1 \right)
    = \frac{d + (e_{i+1} - d - 1)}{d}
    = \frac{e_{i+1} - 1}{d}.
    \end{align*}
This implies firstly that $y_i$ is a positive integer, since $d$ divides $e_{i+1}-1$. Also it implies that $y_i$ divides $(e_{i+1} - 1)\left(\prod_{j \leq i} e_j\right)/d$. Thus property (FT2) holds, and therefore $\Dft(\mathbf{e},k)$ is a $2$-design.

Recall that $\lambda = \frac{bk(k-1)}{v(v-1)}=\frac{bk}{vd}$, where $b := |\B|$ is computed using the formula \eqref{eq:b} in Lemma \ref{lem:b}. Recall $y_i = \frac{e_{i+1} - 1}{d}$ for $i<s$ and $y_s=k$.
This completes the proof.
\end{proof}

Having proved Proposition~\ref{prop:ft}, it is now simple to deduce Theorem~\ref{thm:ex-ft}.

\begin{proof}[Proof of Theorem \ref{thm:ex-ft}]
For each $s\geq 2$ we have infinitely many choices for the integer $d\geq2$, and hence, by Proposition~\ref{prop:ft}, Construction \ref{constr:family} yields infinitely many flag-transitive, $s$-chain-imprimitive $2$-designs, proving  Theorem \ref{thm:ex-ft}.
\end{proof}

\subsection{Further examples}
For the case where $s = 3$, by Theorem~\ref{thm:ft}, a flag-transitive, $3$-chain-imprimitive $2$-design $\Dft(\mathbf{e};k)$ as in Construction \ref{constr:gen} exists if and only if the parameters $\mathbf{e}=(e_1, e_2, e_3),$ and $k$ satisfy the following conditions:
    \begin{enumerate}
    \item $v-1 = e_1 e_2 e_3-1$ divides $(k-1) \cdot d$, where $d = \gcd(e_1 - 1, e_2 - 1, e_3 - 1)$;
    \item $1 + \frac{k-1}{e_1 e_2 e_3 - 1} (e_1 - 1)$ divides $\frac{(e_2 - 1)e_1}{d}$; 
    \item $1 + \frac{k-1}{e_1 e_2 e_3 - 1} (e_1 e_2 - 1)$ divides $\frac{(e_3 - 1)e_1 e_2}{d}$.
    \end{enumerate}
For each solution $\mathbf{e}=(e_1, e_2, e_3), k$, we obtain an example $\Dft(\mathbf{e};k)$ from the general construction in Construction~\ref{constr:gen}. An exhaustive search using \textsc{Magma} \cite{magma} produced all possible parameters satisfying these three conditions for which $e_1, e_2, e_3 \leq 50$. These are listed in Table \ref{tab:flag-tr} and we indicate in boldface font the parameter sets which are obtained using Construction \ref{constr:family}. Thus we have:

\begin{theorem} \label{thm:flagtr-3ch}
For  $e_1, e_2, e_3 \leq 50$ and $\mathbf{e}:=(e_1, e_2, e_3),$ there exists $k\in\mathbb{Z}$ such that the conditions of Construction~\emph{\ref{constr:gen}} are satisfied and $\Dft(\mathbf{e};k)$ is a flag-transitive $3$-chain-imprimitive $2$-design  if and only if $(e_1, e_2, e_3, k)$ are as listed in Table \ref{tab:flag-tr}.
\end{theorem}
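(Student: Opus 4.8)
The plan is to reduce the stated equivalence to the arithmetic criterion of Theorem~\ref{thm:ft} and then to verify it by an exhaustive but feasible computation. First I would note that the three displayed conditions preceding the theorem are precisely the specialisation to $s=3$ of conditions (FT1) and (FT2): condition (1) is (FT1) with $v=e_1e_2e_3$ and $d=\gcd(e_1-1,e_2-1,e_3-1)$, while conditions (2) and (3) are (FT2) for $i=1$ and $i=2$ respectively, obtained by substituting the explicit values $y_1=1+\frac{k-1}{v-1}(e_1-1)$ and $y_2=1+\frac{k-1}{v-1}(e_1e_2-1)$ from \eqref{def:yi}. By Proposition~\ref{prop:gen-ft}, whenever (FT1) and (FT2) hold the incidence structure $\Dft(\mathbf{e};k)$ is a $G$-flag-transitive, $(G,s)$-chain-imprimitive $2$-design; and conversely, by Theorem~\ref{thm:ft} together with Remark~\ref{rem:unique}, if such a design exists then (FT1) and (FT2) must hold and $\Dft(\mathbf{e};k)$ is the unique such design. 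Hence, for fixed $\mathbf{e}$, the values of $k$ for which $\Dft(\mathbf{e};k)$ is a flag-transitive $3$-chain-imprimitive $2$-design are exactly those for which conditions (1)--(3) hold.

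It then remains to determine all $(e_1,e_2,e_3,k)$ with $2\leq e_i\leq 50$ and $1<k<v$ satisfying (1)--(3), and to check that this set coincides with Table~\ref{tab:flag-tr}. I would carry out an exhaustive loop in \textsc{Magma} over the triples $(e_1,e_2,e_3)$. The observation that makes the inner search over $k$ cheap is condition (1) itself: writing $m:=(v-1)/\gcd(v-1,d)$, condition (1) forces $m\mid(k-1)$, so that in the range $1<k<v$ the candidate values of $k$ lie in the arithmetic progression $1+m,\,1+2m,\dots$ and number at most $\gcd(v-1,d)\leq d\leq 49$. For each surviving $k$ one computes $y_1$ and $y_2$, discards the value unless both are positive integers, and then tests the two divisibilities in (2) and (3). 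Collecting every $(e_1,e_2,e_3,k)$ that passes all three tests yields the claimed list.

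The main point requiring care is not the mathematics, which is an immediate translation of Theorem~\ref{thm:ft}, but the correctness and completeness of the computation: one must impose the sparsity constraint from (1) before the more expensive checks (otherwise the naive range $1<k<e_1e_2e_3-1$ is prohibitively large, since $v$ is of order $50^3$), verify integrality of $y_1$ and $y_2$ explicitly rather than assuming it, respect the strict bound $k<v$, and ensure that no parameter quadruple is omitted or duplicated. Since each of these is a routine safeguard and the reduced search space is small, the computation is entirely feasible, and its output is exactly the content of Table~\ref{tab:flag-tr}.
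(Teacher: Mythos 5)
Your proposal is correct and follows essentially the same route as the paper: the equivalence is an immediate translation of Theorem~\ref{thm:ft} (with Proposition~\ref{prop:gen-ft} supplying the forward direction), and the list in Table~\ref{tab:flag-tr} is then produced by an exhaustive \textsc{Magma} search over the three arithmetic conditions. The only difference is that you spell out an efficiency refinement (restricting $k$ to the progression $1+m\mathbb{Z}$ with $m=(v-1)/\gcd(v-1,d)$) that the paper leaves implicit.
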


\begin{table}[ht]
    \centering
    \begin{tabular}{rrrrrrr}
    \hline
    $e_1$ & $e_2$ & $e_3$ & $v = e_1 e_2 e_3$ & $k$ & $y_1$ & $y_2$ \\
    \hline\hline
    $\mathbf{3}$ & $\mathbf{5}$ & $\mathbf{17}$ & $\mathbf{255}$ & $\mathbf{128}$ & $\mathbf{2}$ & $\mathbf{8}$ \\
    $3$ & $5$ & $33$ & $495$ & $248$ & $2$ & $8$ \\
    $3$ & $5$ & $49$ & $735$ & $368$ & $2$ & $8$ \\
    $3$ & $9$ & $29$ & $783$ & $392$ & $2$ & $14$ \\
    $3$ & $13$ & $41$ & $1599$ & $800$ & $2$ & $20$ \\
    $4$ & $4$ & $10$ & $160$ & $54$ & $2$ & $6$ \\
    $4$ & $4$ & $19$ & $304$ & $102$ & $2$ & $6$ \\
    $4$ & $4$ & $28$ & $448$ & $150$ & $2$ & $6$ \\
    $4$ & $4$ & $37$ & $592$ & $198$ & $2$ & $6$ \\
    $4$ & $4$ & $46$ & $736$ & $246$ & $2$ & $6$ \\
    $4$ & $7$ & $16$ & $448$ & $150$ & $2$ & $10$ \\
    $\mathbf{4}$ & $\mathbf{7}$ & $\mathbf{31}$ & $\mathbf{868}$ & $\mathbf{290}$ & $\mathbf{2}$ & $\mathbf{10}$ \\
    $4$ & $7$ & $46$ & $1288$ & $430$ & $2$ & $10$ \\
    $4$ & $10$ & $22$ & $880$ & $294$ & $2$ & $14$ \\
    $4$ & $10$ & $43$ & $1720$ & $574$ & $2$ & $14$ \\
    $4$ & $13$ & $28$ & $1456$ & $486$ & $2$ & $18$ \\
    $4$ & $16$ & $34$ & $2176$ & $726$ & $2$ & $22$ \\
    $4$ & $19$ & $40$ & $3040$ & $1014$ & $2$ & $26$ \\
    $4$ & $22$ & $46$ & $4048$ & $1350$ & $2$ & $30$ \\
    $5$ & $7$ & $37$ & $1295$ & $648$ & $3$ & $18$ \\
    $5$ & $9$ & $17$ & $765$ & $192$ & $2$ & $12$ \\
    $5$ & $9$ & $33$ & $1485$ & $372$ & $2$ & $12$ \\
    $\mathbf{5}$ & $\mathbf{9}$ & $\mathbf{49}$ & $\mathbf{2205}$ & $\mathbf{552}$ & $\mathbf{2}$ & $\mathbf{12}$ \\
    $6$ & $6$ & $11$ & $396$ & $80$ & $2$ & $8$ \\
    $6$ & $6$ & $21$ & $756$ & $152$ & $2$ & $8$ \\
    $6$ & $6$ & $26$ & $936$ & $375$ & $3$ & $15$ \\
    $6$ & $6$ & $31$ & $1116$ & $224$ & $2$ & $8$ \\
    $6$ & $6$ & $41$ & $1476$ & $296$ & $2$ & $8$ \\
    $6$ & $11$ & $36$ & $2376$ & $476$ & $2$ & $14$ \\
    \hline
    \end{tabular} \hspace{0.5cm}
    \begin{tabular}{rrrrrrr}
    \hline
    $e_1$ & $e_2$ & $e_3$ & $v = e_1 e_2 e_3$ & $k$ & $y_1$ & $y_2$ \\
    \hline\hline
    $6$ & $11$ & $46$ & $3036$ & $1215$ & $3$ & $27$ \\
    $6$ & $16$ & $26$ & $2496$ & $500$ & $2$ & $20$ \\
    $6$ & $26$ & $41$ & $6396$ & $1280$ & $2$ & $32$ \\
    $7$ & $10$ & $37$ & $2590$ & $864$ & $3$ & $24$ \\
    $7$ & $25$ & $37$ & $6475$ & $1080$ & $2$ & $30$ \\
    $8$ & $8$ & $36$ & $2304$ & $330$ & $2$ & $10$ \\
    $8$ & $8$ & $50$ & $3200$ & $1372$ & $4$ & $28$ \\
    $8$ & $15$ & $22$ & $2640$ & $378$ & $2$ & $18$ \\
    $8$ & $15$ & $43$ & $5160$ & $738$ & $2$ & $18$ \\
    $8$ & $36$ & $50$ & $14400$ & $2058$ & $2$ & $42$ \\
    $9$ & $5$ & $17$ & $765$ & $192$ & $3$ & $12$ \\
    $9$ & $5$ & $33$ & $1485$ & $372$ & $3$ & $12$ \\
    $9$ & $5$ & $49$ & $2205$ & $552$ & $3$ & $12$ \\
    $9$ & $9$ & $29$ & $2349$ & $588$ & $3$ & $21$ \\
    $9$ & $13$ & $41$ & $4797$ & $1200$ & $3$ & $30$ \\
    $10$ & $7$ & $37$ & $2590$ & $864$ & $4$ & $24$ \\
    $10$ & $10$ & $28$ & $2800$ & $312$ & $2$ & $12$ \\
    $10$ & $28$ & $37$ & $10360$ & $1152$ & $2$ & $32$ \\
    $11$ & $16$ & $46$ & $8096$ & $1620$ & $3$ & $36$ \\
    $12$ & $12$ & $34$ & $4896$ & $891$ & $3$ & $27$ \\
    $15$ & $8$ & $22$ & $2640$ & $378$ & $3$ & $18$ \\
    $15$ & $8$ & $43$ & $5160$ & $738$ & $3$ & $18$ \\
    $15$ & $8$ & $50$ & $6000$ & $1715$ & $5$ & $35$ \\
    $16$ & $6$ & $26$ & $2496$ & $500$ & $4$ & $20$ \\
    $16$ & $11$ & $46$ & $8096$ & $1620$ & $4$ & $36$ \\
    $25$ & $7$ & $37$ & $6475$ & $1080$ & $5$ & $30$ \\
    $28$ & $10$ & $37$ & $10360$ & $1152$ & $4$ & $32$ \\
    $36$ & $8$ & $50$ & $14400$ & $2058$ & $6$ & $42$ \\
    \hline \\
    \end{tabular}
    \caption{Parameters for flag-transitive, $3$-chain-imprimitive $2$-designs $\Dft(\mathbf{e};k)$ in Construction \ref{constr:gen} with $\mathbf{e}=(e_1, e_2, e_3),$ and $e_1, e_2, e_3 \leq 50$}
    \label{tab:flag-tr}
\end{table}


\end{document}